\theoremstyle{change}
\newcommand{\disc}{{\rm disc}}
\newcommand{\A}{{\mathbb A}}
\newcommand{\Q}{{\mathbb Q}}
\newcommand{\Z}{{\mathbb Z}}
\newcommand{\R}{{\mathbb R}}
\newcommand{\C}{{\mathbb C}}
\newcommand{\HH}{{\mathbb H}}
\newcommand{\p}{\mathfrak p}
\newcommand{\OF}{{\mathfrak o}}
\newcommand{\GL}{{\rm GL}}
\newcommand{\PGL}{{\rm PGL}}
\newcommand{\SL}{{\rm SL}}
\newcommand{\SO}{{\rm SO}}
\newcommand{\Symp}{{\rm Sp}}
\newcommand{\GSp}{{\rm GSp}}
\newcommand{\PGSp}{{\rm PGSp}}
\newcommand{\meta}{\widetilde{\mathrm{SL}}_2}
\newcommand{\SSp}{{\rm Sp}}
\DeclareMathOperator{\Cl}{Cl}
\newcommand\blfootnote[1]{%
  \begingroup
  \renewcommand\thefootnote{}\footnote{#1}%
  \addtocounter{footnote}{-1}%
  \endgroup
}
\newcommand{\mat}[4]{{\setlength{\arraycolsep}{0.5mm}\left[
\begin{array}{cc}#1&#2\\#3&#4\end{array}\right]}}
\newcommand{\qed}{\hspace*{\fill}\rule{1ex}{1ex}}
\newcommand{\forget}[1]{}
\def\qdots{\mathinner{\mkern1mu\raise0pt\vbox{\kern7pt\hbox{.}}\mkern2mu
\raise3.4pt\hbox{.}\mkern2mu\raise7pt\hbox{.}\mkern1mu}}
\newenvironment{proof}{\vspace{1ex}\noindent{\it Proof.}\hspace{0.1em}}
	{\hfill\qed\vspace{2ex}}
\newtheorem{lemma}{Lemma.}[section]
\newtheorem{theorem}[lemma]{Theorem.}
\newtheorem{corollary}[lemma]{Corollary.}
\newtheorem{proposition}[lemma]{Proposition.}
\begin{document}


\thispagestyle{empty}
\begin{center}
 {\bf\Large Local and global Maass relations (expanded version)}

 \vspace{3ex}
 {Ameya Pitale, Abhishek Saha and Ralf Schmidt}\blfootnote{The first and third authors are supported by NSF grant DMS-1100541.}

 \vspace{3ex}
 \begin{minipage}{75ex}
  \small {\sc Abstract.} We characterize the irreducible, admissible, spherical representations of $\GSp_4(F)$ (where $F$ is a $p$-adic field) that occur in certain CAP representations in terms of relations satisfied by their spherical vector in a special Bessel model. These local relations are analogous to the Maass relations satisfied by the Fourier coefficients of Siegel modular forms of degree $2$ in the image of the Saito-Kurokawa lifting. We show how the classical Maass relations can be deduced from the local relations in a representation theoretic way, without recourse to the construction of Saito-Kurokawa lifts in terms of Fourier coefficients of half-integral weight modular forms or Jacobi forms. As an additional application of our methods, we give a new characterization of Saito-Kurokawa lifts involving a certain average of Fourier coefficients.
 \end{minipage}
 \end{center}


\tableofcontents

\section{Introduction}
Let $F$ be a holomorphic Siegel modular form of degree $2$ and weight $k$ with respect to the full Siegel modular group $\SSp_4(\Z)$. Then $F$ has a Fourier expansion of the form
\begin{equation}\label{Four-exp-eq}
 F(Z) = \sum\limits_S a(S) e^{2 \pi i\,{\rm Tr}(SZ)},
\end{equation}
where $Z$ is a point in the Siegel upper half space $\HH_2$, and where the sum is taken over the set $\mathcal P_2$ of semi-integral\footnote{Recall that a matrix is semi-integral if its off-diagonal entries are in $\frac12\Z$, and its diagonal entries are in $\Z$.}, symmetric and positive semidefinite $2\times2$-matrices $S$. We say that $F$ satisfies the \emph{Maass relations} if, for all $S=\mat{a}{b/2}{b/2}{c}$,
 \begin{equation}\label{maass-rels-eqn2}
  a(\mat{a}{\frac b2}{\frac b2}{c}) = \sum\limits_{r | {\rm gcd}(a,b,c)} r^{k-1}\, a(\mat{\frac{ac}{r^2}}{\frac{b}{2r}}{\frac{b}{2r}}{1}).
 \end{equation}
Such a relation was first known to be satisfied by Eisenstein series; see \cite{ResSal}. Maass, in \cite{Ma}, started a systematic investigation of the space of modular forms satisfying these relations, calling this space the \emph{Spezialschar}. Within a few years it was proven, through the efforts of Maass, Andrianov and Zagier, that the Spezialschar is precisely the space of modular forms spanned by \emph{Saito-Kurokawa liftings}. Recall that a Saito-Kurokawa lifting is a Siegel modular form of weight $k$ constructed from an elliptic modular form of weight $2k-2$ with $k$ even. The book \cite{EZ} gives a streamlined account of the construction of these liftings, and of the proof that they span the \emph{Spezialschar}.

In addition to this classical approach, it is possible to construct Saito-Kurokawa liftings using automorphic representations theory. For simplicity, we only consider cuspforms. The procedure may be illustrated as follows:
\begin{equation}\label{SKdiagrameq}
 \begin{CD}
  \pi&\qquad&\PGL_2(\A)&@<{\rm Wald}<<&\meta(\A)&@>{\theta}>>\PGSp_4(\A)&\qquad&\Pi\\
  @AAA&&&&&&&&&@VVV\\
  f&&&&&&&&&&&F
 \end{CD}
\end{equation}
Here, $f$ is an elliptic cuspform of weight $2k-2$ with $k$ even and with respect to $\SL_2(\Z)$. Assuming that $f$ is an eigenform, it can be translated into an adelic function which generates a cuspidal automorphic representation $\pi$ of $\GL_2(\A)$. This representation has trivial central character, so really is a representation of $\PGL_2(\A)$. Since $\PGL_2\cong\SO_3$, there is a theta correspondence between this group and $\meta(\A)$, the double cover of $\SL_2(\A)$. More precisely, one considers the \emph{Waldspurger lifting} from $\meta(\A)$ to $\PGL_2(\A)$, which is a variant of the theta correspondence. Let $\tau$ be any pre-image of $\pi$ under this lifting\footnote{In this classical situation, there is precisely one pre-image. For modular forms with level, there can be several possibilities for $\tau$, leading to the phenomenon that one elliptic cusp form $f$ may have several Saito-Kurokawa liftings. We refer to \cite{Sch2} for more explanation.}. Since $\PGSp_4\cong\SO_5$, there is another
theta
correspondence between $\meta(\A)$ and $\PGSp_4(\A)$. We can use it to forward $\tau$ to an automorphic representation $\Pi$ of $\PGSp_4(\A)$. From this $\Pi$ one can extract a Siegel modular form $F$ of weight $k$. It turns out that $\Pi$ is cuspidal, so that $F$ is a cuspform. This $F$ coincides with the classical Saito-Kurokawa lifting of $f$. For the details of this construction, see \cite{PS-SK} and \cite{Sch}.

There is a marked difference between the classical and the representation theoretic constructions. The classical construction directly provides the Fourier coefficients of the modular form $F$ (in terms of the Fourier coefficients of the half-integral weight modular form corresponding to $f$ via the Shimura correspondence). In contrast, the representation theoretic Saito-Kurokawa lifting consists of the following statement: ``For each cuspidal elliptic eigenform $f$ of weight $2k-2$ with even $k$ there exists a cuspidal Siegel eigenform $F$ of weight $k$ such that its spin $L$-function is given by $L(s,F)=L(s,f)\zeta(s-k+1)\zeta(s-k+2)$.'' In this case the Fourier coefficients of $F$ are not readily available.

At the very least, one would like to know that the Fourier coefficients of the modular form $F$ constructed in the representation theoretic way satisfy the Maass relations. One quick argument consists in referring to either \cite{E} or \cite{O}. In these papers it is proven that the Fourier coefficients of $F$ satisfy the Maass relations if and only if $L(s,F)$ has a pole. The pole condition is satisfied because of the appearance of the zeta factors above.

However, it would be desirable to deduce the Maass relations directly from the representation theoretic construction. One reason is that this construction opens the way to generalizations in various directions, and for these more general situations results similar to \cite{E} or \cite{O} are not available. For example, what happens if we replace the above condition ``$k$ even'' by ``$k$ odd''? In this case it turns out that one can still do the representation theoretic construction, the difference being that the archimedean component of the automorphic representation $\Pi$ is no longer in the holomorphic discrete series. Hence, one will obtain a certain type of non-holomorphic Siegel modular form whose adelization generates a global CAP representation. As far as we know, the full details of this construction have yet to
be written out (though see~\cite{Mi}).
But since the non-archimedean situation is no different from the case
for even $k$, we expect this
new type of Siegel modular form to admit a Fourier expansion for which
the Maass relations
hold as well. One could prove such a statement if one had a direct
representation-theoretic way of deducing the Maass relations.
Similarly, we expect that a representation-theoretic proof of the
Maass relations would easily generalize to the case of Saito-Kurokawa lifts with
respect to congruence subgroups.

It was shown in \cite{O13} that a representation theoretic method for proving the Maass relations exists. In the present paper we take a similar, but slightly different approach. Common to both approaches is the fact that certain local Jacquet modules are one-dimensional. This may be interpreted as saying that the local representations in question admit a unique Bessel model, and this Bessel model is \emph{special} (see Sect.~\ref{besselsec} for precise definitions). While \cite{O13} makes use of certain Siegel series to derive an explicit formula for local $p$-adic Bessel functions, we use Sugano's formula, to be found in \cite{Su}.

Our main local result, Theorem \ref{localmaasstheorem} below, asserts the equivalence of five conditions on a given irreducible, admissible, spherical representation $\pi$ of $\GSp_4(F)$ with a special Bessel model (where $F$ is a $p$-adic field). The first condition is that one of the Satake parameters of $\pi$ is $q^{1/2}$ (where $q$ is the cardinality of the residue class field); in particular, such representations are non-tempered. The second condition is that $\pi$ is a certain kind of degenerate principal series representation; these representations occur in global CAP representations with respect to the Borel or Siegel parabolic subgroup. The third and fourth conditions are formulas relating certain values of the spherical Bessel function; the third formula is a local analogue of the Maass relations. The fifth condition is an explicit formula for some of the values of the spherical Bessel function; this formula is very similar to one appearing in \cite{K} and \cite{O13} for the values of a Siegel
series.

In the global part of this paper, we will explain how the classical Maass relations follow from this local result. It is known that the local components of the automorphic representation $\Pi$ in the diagram \eqref{SKdiagrameq} are of the kind covered in Theorem \ref{localmaasstheorem}. Hence, the corresponding spherical Bessel functions satisfy the ``local Maass relations'' (this implication is all that is needed from Theorem \ref{localmaasstheorem}). Since Bessel models are closely related to Fourier coefficients, one can deduce the global (classical) Maass relations from the local relations. To make this work one has to relate the classical notions with the representation theoretic concepts. While this is standard, some care has to be taken, which is why we carry these arguments out in some detail. In fact, we give two different proofs of the classical Maass relations; one uses a result proved by the second author in collaboration with Kowalski and Tsimerman~\cite{KST}, while the other relies on some
explicit computations with Bessel functions which may be of independent interest. As explained above, what we have in mind are future applications to more general situations. Finally, in the last section, we prove a result (Theorem~\ref{t:newsk}) that gives a new characterization of Saito-Kurokawa lifts involving a certain average of Fourier coefficients.

\vspace{3ex}\noindent
{\bf Notation}

\vspace{2ex}\noindent
Let $G=\GSp_4$ be the group of symplectic similitudes of semisimple rank $2$, defined by
$$
 \GSp_4 = \{ g \in \GL_4 : {}^tg J g = \mu(g) J, \,\,\mu(g) \in \GL_1\},  \text{ where } J = \mat{0}{1_2}{-1_2}{0}.
$$
Here, $\mu$ is called the similitude character. Let $\SSp_4 = \{g \in \GSp_4 : \mu(g) = 1\}$.  The Siegel parabolic subgroup $P$ of $\GSp_4$ consists of matrices whose lower left $2\times2$-block is zero. Its unipotent radical $U$ consists of all elements of the block form $\mat{1}{X}{}{1}$, where $X$ is symmetric. The standard Levi component $M$ of $P$ consists of all elements $\mat{A}{}{}{u\,^t\!A^{-1}}$ with $u\in\GL_1$ and $A\in\GL_2$.

Over the real numbers, we have the identity component $G(\R)^+:=\{g\in\GSp_4(\R):\mu(g)>0\}$.  Let $\HH_2$ be the Siegel upper half space of degree $2$. Hence, an element of $\HH_2$ is a symmetric, complex $2\times 2$-matrix with positive definite imaginary part. The group $G(\R)^+$ acts on $\HH_2$ via $g\langle Z \rangle = (AZ+B)(CZ+D)^{-1}$ for $g = \mat{A}{B}{C}{D}$.

Given any commutative ring $R$, we denote by ${\rm Sym}_2(R)$ the set of symmetric $2\times2$-matrices with coefficients in $R$. The symbol $\mathcal{P}_2$ denotes the set of positive definite, half-integral symmetric $2\times2$-matrices.
\section{Spherical Bessel functions}\label{besselsec}
In this section only, $F$ is a local non-archimedean field with ring of integers $\OF$, prime ideal $\p$, uniformizer $\varpi$ and order of residue field $q$. An irreducible, admissible representation of $\GSp_4(F)$ is called \emph{spherical} if it admits a spherical vector, i.e., a non-zero $\GSp_4(\OF)$-invariant vector. Let $(\pi,V)$ be such a representation. Then $\pi$ is a constituent of a representation parabolically induced from a character $\gamma$ of the standard Borel subgroup of $\GSp_4(F)$. The numbers
\begin{align}
& \gamma^{(1)} = \gamma(\begin{bmatrix}\varpi\\&\varpi\\&&1\\&&&1\end{bmatrix}), \qquad  \gamma^{(2)} = \gamma(\begin{bmatrix}\varpi\\&1\\&&1\\&&&\varpi\end{bmatrix}), \nonumber\\
&\gamma^{(3)} = \gamma(\begin{bmatrix}1\\&1\\&&\varpi\\&&&\varpi\end{bmatrix}), \qquad \gamma^{(4)} = \gamma(\begin{bmatrix}1\\&\varpi\\&&\varpi\\&&&1\end{bmatrix}), \label{alpha-defn}
\end{align}
are called the \emph{Satake parameters} of $\pi$. The conjugacy class of ${\rm diag}(\gamma^{(1)},\gamma^{(2)},\gamma^{(3)},\gamma^{(4)})$ in $\GSp_4(\C)$ determines the isomorphism class of $\pi$.

Note that $\gamma^{(1)}\gamma^{(3)}=\gamma^{(2)}\gamma^{(4)}=\omega_\pi(\varpi)$, where $\omega_\pi$ is the central character of $\pi$. Hence, in the case of trivial central character, the Satake parameters are $\{\alpha^{\pm1},\beta^{\pm1}\}$ for some $\alpha,\beta\in\C^\times$. In this case we allow ourselves a statement like ``one of the Satake parameters of $\pi$ is $\alpha^{\pm1}$''.

In this work we will employ the notation of \cite{ST} and the classification of \cite{RS} for constituents of parabolically induced representations of $\GSp_4(F)$. According to Table A.10 of \cite{RS}, the spherical representations are of type I, IIb, IIIb, IVd, Vd or VId, and a representation of one these types is spherical if and only if the inducing data is unramified. Note that type IVd is comprised of one-dimensional representations, which are irrelevant for our purposes. Representations of type I are irreducible principal series representations, and they are the only generic spherical representations.

By \cite{Sch}, representations of type IIb occur as local components of the automorphic representations attached to classical Saito-Kurokawa liftings. Recall that these automorphic representations are CAP (cuspidal associated to parabolic) with respect to the Siegel parabolic subgroup; this property has been defined on p.\ 315 of \cite{PS-SK}, where it was called ``strongly associated to $P$''. One can show that representations of type Vd and VId occur as local components of automorphic representations which are CAP with respect to $B$, the Borel subgroup. By Theorem 2.2 of \cite{PS-SK}, $P$-CAP and $B$-CAP representations with trivial central character have a common characterization as being theta liftings from the metaplectic cover of $\SL_2$. We will see below that spherical representations of type IIb, Vd and VId with trivial central character have a common characterization in terms of their spherical Bessel functions.

We will briefly recall the notion of Bessel model; for more details see \cite{RS2} (and \cite{PS1} for the archimedean case). Let $\psi$ be a fixed character of $F$. Let $S=\mat{a}{b/2}{b/2}{c}$ with $a,b,c\in F$. Such a matrix defines a character $\theta=\theta_S$ of $U$, the unipotent radical of the Siegel parabolic subgroup, by
$$
 \theta(\mat{1}{X}{}{1})=\psi({\rm Tr}(S X)),\qquad X\in{\rm Sym}_2(F).
$$
Every character of $U$ is of this form for a uniquely determined $S$. From now on we will assume that $\theta$ is \emph{non-degenerate}, by which we mean that $S$ is invertible. If $-\det(S)\in F^{\times2}$ we set $L=F\oplus F$ and say that we are in the \emph{split case}. Otherwise  we set $L=F(\sqrt{-\det(S)})$ and say that we are in the \emph{non-split case}. Below we will use the \emph{Legendre symbol}
\begin{equation}\label{legendreeq}
 \Big(\frac{L}{F}\Big)=\begin{cases}
                        -1&\text{if $L/F$ is an unramified field extension},\\
                        0&\text{if $L/F$ is a ramified field extension},\\
                        1&\text{if $L=F\oplus F$}.
                       \end{cases}
\end{equation}
Let $T=T_S$ be the torus defined by $T=\{g\in\GL_2:\:^tgSg=\det(g)S\}$. Then $T(F)\cong L^\times$. We think of $T(F)$ embedded into $\GSp_4(F)$ via $g\mapsto\mat{g}{}{}{\det(g)\,^tg^{-1}}$. Then $T(F)$ is the identity component of the stabilizer of $\theta$ in the Levi component of the Siegel parabolic subgroup. We call the semidirect product $R = TU$ the \emph{Bessel subgroup} defined by $S$. Given a character $\Lambda$ of $T(F)$, the map $tu\mapsto\Lambda(t)\theta(u)$ ($t\in T(F)$, $u\in U(F)$) defines a character $\Lambda\otimes\theta$ of $R(F)$.

Now let $(\pi,V)$ be an irreducible, admissible representation of $\GSp_4(F)$. A \emph{$(\Lambda,\theta)$-Bessel functional} for $\pi$ is a non-zero element of ${\rm Hom}_R(V,\C_{\Lambda\otimes\theta})$. Equivalently, a $(\Lambda,\theta)$-Bessel functional for $\pi$ is a non-zero functional $\beta$ on $V$ satisfying $\beta(\pi(r)v) = (\Lambda\otimes\theta) (r) v$ for all $r \in R(F)$, $v \in V$. Given such a functional $\beta$, the corresponding \emph{Bessel model} for $\pi$ consists of the functions $B(g)=\beta(\pi(g)v)$, where $v\in V$. By Theorem 6.1.4 of \cite{RS2} every infinite-dimensional $\pi$ admits a Bessel functional for some choice of $\theta$ and $\Lambda$. The question of uniqueness is discussed in Sect.~6.3 of \cite{RS2}. Bessel models with $\Lambda=1$ are called \emph{special}.

In the case of spherical representations, one may ask about an explicit formula for the spherical vector in a $(\Lambda,\theta)$-Bessel model. Such a formula was given by Sugano in \cite{Su} (at the same time proving that such models are unique for spherical representations). In the case that $\Lambda$ is unramified, Sugano's formula is conveniently summarized in Sect.~(3.6) of \cite{Fu}. We recall the result. To begin, we assume that the elements $a,b,c$ defining the matrix $S$ satisfy the following \emph{standard assumptions}:
\begin{equation}\label{standardassumptionseq}
 \begin{minipage}{85ex}
  \begin{itemize}
   \item $a,b\in\OF$ and $c\in\OF^\times$.
   \item In the non-split case, $b^2-4ac$ is a generator of the discriminant of $L/F$. In the split case, $b^2-4ac\in\OF^\times$.
  \end{itemize}
 \end{minipage}
\end{equation}
This is not a restriction of generality; using some algebraic number theory, one can show that, after a suitable transformation $S\mapsto\lambda\,^t\!ASA$ with $\lambda\in F^\times$ and $A\in\GL_2(F)$, the standard assumptions are always satisfied. One consequence of \eqref{standardassumptionseq} is the decomposition
\begin{equation}\label{doub-cos-gl2-eq}
 \GL_2(F) = \bigsqcup_{m \geq 0} T(F)\mat{\varpi^m}{}{}{1} \GL_2(\OF);
\end{equation}
see Lemma 2-4 of \cite{Su}. In conjunction with the Iwasawa decomposition, this implies
\begin{equation}\label{doub-cos-eq}
 \GSp_4(F) = \bigsqcup_{\substack{l, m \in \Z \\ m \geq 0}} R(F)h(l,m) \GSp_4(\OF),
\end{equation}
where
$$
 h(l,m) = \begin{bmatrix}\varpi^{l+2m}\\&\varpi^{l+m}\\&&1\\&&&\varpi^m\end{bmatrix}.
$$
Hence, a spherical Bessel function $B$ is determined by the values $B(h(l,m))$. It is easy to see that $B(h(l,m))=0$ if $l<0$ (see Lemma (3.4.4) of \cite{Fu}). Sugano's formula now says that
\begin{equation}\label{suganosformulaeq}
 \sum_{l,m\geq0}B(h(l,m))x^my^l=\frac{H(x,y)}{P(x)Q(y)}
\end{equation}
where $P(x)$, $Q(y)$ and $H(x,y)$ are polynomials whose coefficients depend on the Satake parameters, on the value of $\big(\frac{L}{F}\big)$, and on $\Lambda$; see p.\ 205 of \cite{Fu} for details. The formula implies in particular that $B(1)\neq0$, so that we may always normalize $B(1)$ to be $1$.


With these preparations, we may now formulate our main local theorem.

\begin{theorem}\label{localmaasstheorem}
 Let $\pi$ be an irreducible, admissible, spherical representation of $\GSp(4,F)$ with trivial central character. Assume that $\pi$ admits a special Bessel model with respect to the matrix $S$. Let $B$ be the spherical vector in such a Bessel model for $\pi$, normalized such that $B(1)=1$. Then the following are equivalent.
 \begin{enumerate}
  \item One of the Satake parameters of $\pi$ is $q^{\pm1/2}$.
  \item $\pi$ is one of the representations in the following list:
   \begin{itemize}
    \item $\chi1_{\GL(2)}\rtimes\chi^{-1}$ for an unramified character $\chi$ of $F^\times$ (type IIb).
    \item $L(\nu\xi,\xi\rtimes\nu^{-1/2})$, where $\xi$ is the non-trivial, unramified, quadratic character of $F^\times$ (type Vd).
    \item $L(\nu,1_{F^\times}\rtimes\nu^{-1/2})$ (type VId).
   \end{itemize}
  \item For all $l,m\geq0$,
   \begin{equation}\label{maass-rels-thmeq3c}
    B(h(l,m))=\sum_{i=0}^lq^{-i} B(h(0,l+m-i)).
   \end{equation}
  \item The following relation is satisfied:
    \begin{equation}\label{maass-rels-thmeq3d2}
    B(h(1,0))=B(h(0,1)) + q^{-1}.
   \end{equation}
  \item The following two conditions are satisfied:
   \begin{itemize}
    \item For all $m\geq0$,
     \begin{equation}\label{localmaasstheoremeq2}
      q^{3m/2} B(h(0,m)) = \frac{\alpha^{m+1}-\alpha^{-m-1}}{\alpha-\alpha^{-1}} -\Big(\frac{L}{F}\Big) q^{-1/2}\,\frac{\alpha^m-\alpha^{-m}}{\alpha-\alpha^{-1}},
     \end{equation}
     where $\alpha^{\pm1}$ is one of the Satake parameters of $\pi$. Here, if $\alpha=\alpha^{-1}$, we understand $\frac{\alpha^m-\alpha^{-m}}{\alpha-\alpha^{-1}}=\sum_{i=1}^m\alpha^{m+1-2i}$.
    \item We are not in the following exceptional situation: $\big(\frac{L}{F}\big)=-1$ (the inert case) and $\pi=\chi1_{\GL(2)}\rtimes\chi^{-1}\xi$ (type IIb), where $\chi,\xi$ are the unramified characters with $\chi(\varpi)=\pm i$ and $\xi(\varpi)=-1$.
   \end{itemize}
 \end{enumerate}
\end{theorem}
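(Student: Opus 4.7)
The plan is to establish the cycle (ii) $\Rightarrow$ (i) $\Rightarrow$ (v) $\Rightarrow$ (iii) $\Rightarrow$ (iv) $\Rightarrow$ (ii), using Sugano's formula \eqref{suganosformulaeq} as the principal computational tool. Writing the Satake parameters in the trivial central character case as $\{\alpha^{\pm1},\beta^{\pm1}\}$, the polynomials $P(x)$, $Q(y)$, and $H(x,y)$ in \eqref{suganosformulaeq} are explicit in $\alpha,\beta,q$, and the Legendre symbol $\big(\frac{L}{F}\big)$, as spelled out in \cite[p.~205]{Fu}, so many of the implications reduce to polynomial identities.

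The equivalence (i) $\Leftrightarrow$ (ii) is essentially bookkeeping: one reads off the Satake parameters of each spherical type (I, IIb, IIIb, Vd, VId; type IVd consists of one-dimensional representations and is excluded by the infinite-dimensional context of Bessel models) from the unramified inducing data via \cite{RS}, then imposes trivial central character together with the requirement that one Satake parameter equal $q^{\pm1/2}$. The latter forces reducibility of any inducing principal series (eliminating type I), and a short case analysis produces exactly the three families in (ii).

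For (i) $\Rightarrow$ (v), set $y=0$ in \eqref{suganosformulaeq} to obtain $G(x):=\sum_{m\geq 0}B(h(0,m))\,x^m = H(x,0)/P(x)$. When one Satake parameter equals $q^{1/2}$, a linear factor of $P(x)$ is cancelled by a matching factor of $H(x,0)$, and the resulting rational function admits a partial-fraction expansion whose coefficients reproduce the right-hand side of \eqref{localmaasstheoremeq2}, with the $\big(\frac{L}{F}\big)$-term emerging from the constant term of $H(x,0)$. The exceptional configuration in (v) is precisely the one in which $\alpha=q^{1/2}$ accidentally coincides with a second Satake parameter, making the partial-fraction step collapse; tracking this degeneracy identifies the excluded IIb representation. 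For (v) $\Rightarrow$ (iii), I recast (iii) as the generating-function identity
$$
F(x,y) \;=\; \frac{q}{q-y}\cdot\frac{x\,G(x)-y\,G(y)}{x-y},
$$
which follows from \eqref{maass-rels-thmeq3c} by substituting $j=l-i$, summing over $l\geq j$ geometrically in $y/q$, and recognising the remaining sum as $(xG(x)-yG(y))/(x-y)$. Feeding the explicit $G$ from (v) into this identity and clearing denominators reduces the problem to a polynomial identity among $H,P,Q$, verified using the factorisation $P(x)=(1-q^{-1}x)\tilde P(x)$ forced by $\alpha=q^{1/2}$ together with a matching factor of $H(x,y)$.

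The implication (iii) $\Rightarrow$ (iv) is immediate, by specialising $(l,m)=(1,0)$ and using $B(1)=1$. For (iv) $\Rightarrow$ (i), I compute $B(h(1,0))$ and $B(h(0,1))$ from Sugano's formula as explicit rational expressions in the Satake parameters; substituting into \eqref{maass-rels-thmeq3d2} and clearing denominators yields a polynomial in $\alpha,\beta$ that, upon factoring, vanishes iff one of $\alpha^{\pm1},\beta^{\pm1}$ equals $q^{1/2}$. The main technical obstacle is twofold: first, both (v) $\Rightarrow$ (iii) and (iv) $\Rightarrow$ (i) must be carried out separately for each value of $\big(\frac{L}{F}\big)\in\{-1,0,1\}$, since the shape of Sugano's $H(x,y)$ differs in the split, inert, and ramified settings; second, the exceptional IIb configuration in (v) arises from an accidental coincidence of Satake parameters, and tracking it correctly through the partial-fraction decomposition is the most delicate point of the argument.
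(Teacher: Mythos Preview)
Your cycle has a genuine gap at (v) $\Rightarrow$ (iii). The generating-function identity equivalent to (iii),
\[
\frac{H(x,y)}{P(x)Q(y)} \;=\; \frac{1}{1-q^{-1}y}\cdot\frac{xG(x)-yG(y)}{x-y},
\]
involves the full Sugano data $H(x,y)$, $P(x)$, $Q(y)$, and these depend on \emph{both} Satake parameters $\alpha$ and $\beta$. Condition (v) only hands you $G(x)=H(x,0)/P(x)$ as an explicit function of $\alpha$; it does not directly determine $\beta$, and hence does not determine the left-hand side above. Your phrase ``forced by $\alpha=q^{1/2}$'' is backwards: in (v), $\alpha$ is the Satake parameter \emph{other} than $q^{\pm1/2}$; what you actually need is $\beta\in\{q^{1/2},q^{-1/2}\}$, i.e.\ condition (i). Extracting this from (v) is real work: comparing the explicit $G(x)$ against Sugano's $H(x,0)/P(x)$ (the paper does this via the recurrence \eqref{recurrence-reln} for $m=1,2,3,4$) shows that either $\beta+\beta^{-1}=q^{1/2}+q^{-1/2}$ or one lands in the exceptional IIb configuration, which is exactly why (v) carries the exclusion clause. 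So your (v) $\Rightarrow$ (iii) is really (v) $\Rightarrow$ (i) $\Rightarrow$ (iii), and the first arrow is missing from your outline.

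Your description of the exceptional case is also off. It is not a degeneracy in the partial-fraction step of (i) $\Rightarrow$ (v), and it is not ``$\alpha=q^{1/2}$ coinciding with a second parameter''. In the exceptional representation the Satake parameters are $\{\pm i,\,-q^{1/2},\,-q^{-1/2}\}$, so $\alpha=\pm i$ in the notation of (v) and no parameter equals $q^{\pm1/2}$; formula \eqref{localmaasstheoremeq2} nonetheless holds there by accident. The exclusion clause exists to make (v) $\Rightarrow$ (i) valid, not to patch (i) $\Rightarrow$ (v).

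By contrast, the paper proves (i) $\Rightarrow$ (iii) directly by verifying the polynomial identity \eqref{maass-rels-thmeq3e} once one Satake parameter is $q^{1/2}$, and handles (i) $\Leftrightarrow$ (v) separately via the recurrence; it never routes through (v) to reach (iii). Your (iv) $\Rightarrow$ (i) does match the paper's argument, and in fact the relevant expression factors uniformly as $(q-\delta)^{-1}\prod_{\gamma}(q^{-1/2}-\gamma)$ over the four Satake parameters, so no case split on $\big(\frac{L}{F}\big)$ is needed there.
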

\begin{proof}
i) $\Leftrightarrow$ ii) follows by inspecting the list of Satake parameters of all spherical representations; see Table A.7 of \cite{RS}.

iii) $\Rightarrow$ iv) is trivial.

iv) $\Rightarrow$ i) $\Rightarrow$ iii) Observe that \eqref{maass-rels-thmeq3c} is equivalent to the following identity between generating series,
\begin{equation}\label{maass-rels-thmeq3d}
 \sum_{l,m\geq0}B(h(l,m))x^my^l=\sum_{l,m\geq0}\sum_{i=0}^lq^{-i} B(h(0,l+m-i))x^my^l.
\end{equation}
By Sugano's formula, the left hand side equals
$$
 \text{LHS}=\frac{H(x,y)}{P(x)Q(y)},
$$
with $H,P,Q$ as in Proposition 2-5 of \cite{Su}. For the right hand side of \eqref{maass-rels-thmeq3d}, we calculate
\begin{align*}
 \text{RHS}&=\sum_{m=0}^\infty\:\sum_{l=0}^\infty\:\sum_{i=0}^lq^{-i} B(h(0,l+m-i))x^my^l\\
 &=\sum_{m=0}^\infty\:\sum_{i=0}^\infty\:\sum_{l=i}^\infty q^{-i} B(h(0,l+m-i))x^my^l\\
 &=\sum_{m=0}^\infty\:\sum_{i=0}^\infty\:\sum_{l=0}^\infty q^{-i} B(h(0,l+m))x^my^{l+i}\\
 &=\frac1{1-q^{-1}y}\;\sum_{m=0}^\infty\:\sum_{l=0}^\infty B(h(0,l+m))x^my^l\\
 &=\frac1{1-q^{-1}y}\;\sum_{j=0}^\infty\:\sum_{l+m=j} B(h(0,j))x^my^l\\
 &=\frac1{1-q^{-1}y}\;\sum_{j=0}^\infty\:B(h(0,j))\,\frac{x^{j+1}-y^{j+1}}{x-y}\\
 &=\frac1{(1-q^{-1}y)(x-y)}\bigg(x\sum_{j=0}^\infty\:B(h(0,j))x^j-y\sum_{j=0}^\infty\:B(h(0,j))y^j\bigg)\\
 &=\frac1{(1-q^{-1}y)(x-y)}\bigg(x\frac{H(x,0)}{P(x)Q(0)}-y\frac{H(y,0)}{P(y)Q(0)}\bigg)\\
 &=\frac1{(1-q^{-1}y)(x-y)}\bigg(x\frac{H(x,0)}{P(x)}-y\frac{H(y,0)}{P(y)}\bigg).
\end{align*}
Hence, \eqref{maass-rels-thmeq3d} is equivalent to
\begin{equation}\label{maass-rels-thmeq3e}
 (1-q^{-1}y)(x-y)H(x,y)P(y)-Q(y)\Big(xP(y)H(x,0)-yP(x)H(y,0)\Big)=0.
\end{equation}
If one of the Satake parameters is $q^{\pm1/2}$, then one can verify that \eqref{maass-rels-thmeq3e} is satisfied. This shows that i) $\Rightarrow$ iii). Conversely, assume that iv) is satisfied.  Let $F(x,y)$ be the polynomial on the left hand side of \eqref{maass-rels-thmeq3e}. Then iv) is equivalent to saying that the coefficient of $y$ of the power series
$$
 T(y) = \frac{H(0,y)}{Q(y)} - \frac{H(y,0)}{(1-q^{-1}y)P(y)},
$$
which has no constant term, vanishes. In particular, this means that the $y^2$-coefficient of $F(0,y)=-y Q(y)P(y)(1-q^{-1}y)T(y)$ is equal to 0. But it can be easily checked that the $y^2$-coefficient of $F(0,y)$ is given by
$$
 \frac{(q^{-1/2}-\alpha^{-1})(q^{-1/2}-\alpha)(q^{-1/2}-\beta^{-1})(q^{-1/2}-\beta)}{q-\delta},
$$
where $\alpha^{\pm1}$, $\beta^{\pm1}$ are the Satake parameters of $\pi$. It follows that $\alpha=q^{\pm1/2}$ or $\beta=q^{\pm1/2}$. The completes the proof of iv) $\Rightarrow$ i).

i) $\Leftrightarrow$ v) Let $\alpha^{\pm1}$, $\beta^{\pm1}$ be the Satake parameters of $\pi$. By Sugano's formula,
\begin{equation}\label{eqn1}
P(x) \sum\limits_{m \geq 0} B(h(0,m)) x^m = H(x, 0).
\end{equation}
Note that $H(x,0)$ (resp.\ $P(x)$) is a degree $3$ (resp.\ degree $4$) polynomial. Write
$$
 P(x) = P_0 + P_1x + P_2 x^2 + P_3x^3 + P_4 x^4, \qquad H(x,0) = H_0 + H_1 x + H_2 x^2 + H_3 x^3.
$$
Abbreviating $A=\alpha+\alpha^{-1}$ and $B=\beta+\beta^{-1}$, as well as $\delta=\big(\frac{L}{F}\big)$, we have
\begin{align*}
 P_0&=1,\\
 P_1&=-q^{-2}AB,\\
 P_2&=q^{-4}(A^2+B^2-2),\\
 P_3&=-q^{-6}AB,\\
 P_4&=q^{-8},
\end{align*}
and
\begin{align*}
 H_0&=1,\\
 H_1&=\frac1{q^2(q-\delta)}\Big(q+1+\delta(\delta+1)-q^{1/2}(\delta+1)(A+B)+\delta AB\Big),\\
 H_2&=\frac1{q^5(q-\delta)}\Big(q(\delta+1)+\delta^2(q+1)-q^{1/2}\delta(\delta+1)(A+B)+\delta q AB\Big),\\
 H_3&=-q^{-7}\delta.
\end{align*}
Taking the $m^{th}$ derivative of both sides of \eqref{eqn1}, setting $x=0$ and dividing by $m!$, we get the following recurrence relation for $B(h(0,m))$,
\begin{equation}\label{recurrence-reln}
 \sum\limits_{i=0}^4 B(h(0, m-i)) P_i = H_m,
\end{equation}
where $H_m = 0$ if $m > 3$. Assume that the Satake parameters are $\alpha^{\pm1}$ and $q^{\pm1/2}$, i.e., $B=q^{1/2}+q^{-1/2}$. Then, using \eqref{recurrence-reln} and induction on $m$, it is easy to verify that \eqref{localmaasstheoremeq2} holds. Conversely, assume that \eqref{localmaasstheoremeq2} holds for all $m\geq0$. Then, from \eqref{recurrence-reln} for $m=1$, we find that either $B=q^{1/2}+q^{-1/2}$ or $A=q^{-1/2}(\delta+1)$. Assume that $A=q^{-1/2}(\delta+1)$. From \eqref{recurrence-reln} for $m=2,3,4$, we conclude, after some calculation, that $B=-q^{1/2}-q^{-1/2}$ and $\delta=-1$. It follows that $A=0$, so that $\alpha=\pm i$. Looking at Satake parameters, this is precisely the excluded exceptional situation.
\end{proof}

\noindent{\bf Remarks:}
 \begin{enumerate}
  \item The second condition in part v) of this theorem cannot be omitted, since in this exceptional situation the formula \eqref{localmaasstheoremeq2} holds as well.
  \item There is a certain analogy of the identity \eqref{maass-rels-thmeq3c} with the classical Maass relations \eqref{maass-rels-eqn2}. In fact, in the proof of Theorem \ref{maass-rels-thm} we will show that the Maass relations are implied by the local relations \eqref{maass-rels-thmeq3c}.
  \item Combining the formulas iii) and v) of Theorem \ref{localmaasstheorem}, we obtain
   \begin{equation}\label{siegelserieseq}
    q^{3m/2}B(h(l,m-l))=\sum_{i=0}^lq^{i/2}\bigg(\frac{\alpha^{m-i+1}-\alpha^{-(m-i+1)}}{\alpha-\alpha^{-1}} -\Big(\frac{L}{F}\Big) q^{-1/2}\,\frac{\alpha^{m-i}-\alpha^{-(m-i)}}{\alpha-\alpha^{-1}}\bigg)
   \end{equation}
   for $m\geq l\geq0$. The expression on the right hand side, viewed as a polynomial in $\alpha$, is related to the value of a certain Siegel series; see Hilfssatz 10 in \cite{K} and Corollary 5.1 in \cite{O13}. In fact, formula \eqref{siegelserieseq} appears as Lemma 8.1 of \cite{O13}.
 \end{enumerate}
\section{Adelization and Fourier coefficients}
We turn to classical Siegel modular forms and their adelization. Let $\Gamma = \Symp_4(\Z)$ and $S_k^{(2)}(\Gamma)$ be the space of holomorphic cuspidal Siegel modular forms of degree $2$ and weight $k$ with respect to $\Gamma$. Hence, if $F \in S_k^{(2)}(\Gamma)$, then for all $\gamma \in \Gamma$ we have $F |_k \gamma=F$, where
\begin{equation}\label{Siegel mod form}
    (F |_k g)(Z) := \mu(g)^k j(g, Z)^{-k}
    F(g \langle Z \rangle)
\end{equation}
for $g \in G(\R)^+$ and $Z \in \HH_2$, the Siegel upper half space. Here $j(g, Z) = \det(CZ+D)$ for $g = \mat{A}{B}{C}{D} \in G(\R)^+$. The Fourier expansion of $F$ is given by
\begin{equation}\label{Four-exp}
 F(Z) = \sum\limits_S a(S) e^{2 \pi i\,{\rm Tr}(SZ)},
\end{equation}
where the sum is taken over the set $\mathcal P_2$ of semi-integral, symmetric and positive definite matrices $S$.

Let $\A$ be the ring of adeles of $\Q$. It follows from the strong approximation theorem for $\SSp_4$ that
\begin{equation}\label{spin strong approx}
    G(\A) = G(\Q) G(\R)^+ K_0
\end{equation}
where $K_0 := \prod\limits_{p < \infty} \Gamma_p$ with $\Gamma_p=G(\Z_p)$. Let $F \in S_k^{(2)}(\Gamma)$. Write $g \in G(\A)$ as $g = g_{\Q} g_{\infty}g_0$ with $g_{\Q} \in G(\Q)$, $g_{\infty} \in G(\R)^+$, $g_0 \in K_0$, and define $\Phi_F : G(\A)
\rightarrow \C$ by the formula
\begin{equation}\label{adelic automorphic defn}
    \Phi_F(g) := (F |_k g_{\infty})(i 1_2).
\end{equation}
Since $G(\R)^+K_0 \cap G(\Q) = \Gamma$, the function $\Phi_F$ is well-defined. From the definition it is clear that for all $g \in G(\A)$, $\rho \in G(\Q)$, $k_0 \in K_0$, $k_{\infty} \in K_{\infty}$ and $z \in Z(\A)$, it satisfies
\begin{equation}\label{adelic-fn-props}
\Phi_F(z \rho g k_{\infty} k_0) = \Phi_F(g) j(k_{\infty}, i 1_2)^{-k}.
\end{equation}
Here $Z \simeq \GL_1$ is the center of $\GSp_4$ and $K_{\infty}\simeq U(2)$ is the standard maximal compact subgroup of $\Symp_{4}(\R)$.

Let $P=MU$ be the Siegel parabolic subgroup of $G$. By the Iwasawa decomposition, $G(\A) = U(\A)M(\A)K_{\infty}K_0$. Let $\psi : \Q \backslash \A \rightarrow \C^\times$ be the character such that $\psi(x) = e^{2 \pi i x}$ if $x \in \R$ and $\psi(x) = 1$ for $x \in \Z_p$. Given $S\in{\rm Sym}_2(\Q)$, one obtains a character $\Theta_{S}$ of $U(\Q) \backslash U(\A)$ by
$$
 \Theta_{S}(\mat{1}{X}{}{1}) = \psi({\rm Tr}(S X)).
$$
Note that every character of $U(\Q) \backslash U(\A)$ is obtained in this way. For $S \in {\rm Sym}_2(\Q)$  we define the following adelic Fourier coefficient of $\Phi_F$,
\begin{equation}\label{Adelic-fourier-defn}
    \Phi_F^{S}(g) := \int\limits_{U(\Q) \backslash U(\A)}
    \Phi_F(n g) \Theta_{S}^{-1}(n)\,dn \qquad\text{ for } g \in G(\A).
\end{equation}
The following result, which is standard, provides a formula for $\Phi_F^{S}(g)$ in terms of the Fourier coefficients of $F$.
\begin{proposition}\label{fourier-relation-prop}
 Let $g = n_0mk_\infty k_0$, with $n_0 \in U(\A)$, $m \in M(\A)$, $k_\infty \in K_\infty$ and $k_0 \in K_0$. Let $m = m_\Q m_\infty m_0$, with $m_\Q \in M(\Q)$, $m_\infty \in M(\R)^+$ and $m_0 \in M(\A) \cap K_0$. Write $m_\Q = \mat{A}{}{}{v^{-1}\,^t\!A^{-1}}$ with $A \in \GL_2(\Q)$ and $v \in \Q^\times$. Let $Z_0 = m_\infty \langle i1_2 \rangle$. Let $S \in {\rm Sym}_2(\Q)$ be non-degenerate, and let $S' = v\,^t\!A S A$.  Let $F \in S_k^{(2)}(\Gamma)$ with Fourier expansion (\ref{Four-exp}) and let $\Phi_F^S$ be as defined in (\ref{Adelic-fourier-defn}). Then
 \begin{equation}\label{fourier-relation-eqn}
  \Phi_F^{S}(g) = \begin{cases}
    \Theta_{S}(n_0) \mu(m_\infty)^k\,j(g_\infty, i1_2)^{-k}\,a(S') e^{2 \pi i\,{\rm Tr}(S'Z_0)} & \text{ if } S' \in \mathcal P_2,\\
    0 & \text{ otherwise},
  \end{cases}
 \end{equation}
 where $g_\infty=m_\infty k_\infty$. In particular, if $S \in \mathcal P_2$, we have
 \begin{equation}\label{Phi-A-reln}
  \Phi_F^{S}(1) = a(S) e^{-2 \pi\,{\rm Tr}(S)}.
 \end{equation}
 Also, for $S \in {\rm Sym}_2(\Q)$ and $S' = v\,^t\!A S A$, with $A \in \GL_2(\Q)$ and $v \in \Q^\times$, we have
 \begin{equation}\label{change-of-T}
  \Phi_F^{S'}(g) = \Phi_F^{S}(\mat{A}{}{}{v^{-1}\,^t\!A^{-1}}g) \qquad \text{ for all } g \in G(\A).
 \end{equation}
\end{proposition}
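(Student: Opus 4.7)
My plan is to unfold the adelic integral defining $\Phi_F^S(g)$ one step at a time, using the transformation properties of $\Phi_F$ to peel off the factors of $g$. After these reductions the computation collapses to integrating the Fourier expansion \eqref{Four-exp} of $F$ against a character on ${\rm Sym}_2(\R)/{\rm Sym}_2(\Z)$, and orthogonality then extracts the coefficient $a(S')$.

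First I apply \eqref{adelic-fn-props} to handle $k_\infty k_0$: right $K_0$-invariance absorbs $k_0$, and $k_\infty$ produces the scalar $j(k_\infty, i 1_2)^{-k}$. Translating $n \mapsto n n_0^{-1}$ in $U(\A)$ then pulls out $\Theta_S(n_0)$, reducing the problem to computing $\Phi_F^S(m)$. For $m = m_\Q m_\infty m_0$, the $m_0$ contribution is again absorbed by right $K_0$-invariance, whereas $m_\Q$ is handled by the key step of the proof: I use left $G(\Q)$-invariance to replace $\Phi_F(n m_\Q m_\infty)$ by $\Phi_F\bigl((m_\Q^{-1} n m_\Q)\, m_\infty\bigr)$ and then substitute $n \mapsto m_\Q n m_\Q^{-1}$. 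A short calculation shows that conjugation by $m_\Q$ acts on $U(\A) \cong {\rm Sym}_2(\A)$ by $X \mapsto v A X\,{}^t\!A$, so the character pulls back as $\Theta_S \circ {\rm Ad}(m_\Q) = \Theta_{S'}$ with $S' = v\,{}^t\!A S A$, while the Jacobian $|v\det A|_\A^3 = 1$ by the product formula preserves Haar measure on $U(\Q) \backslash U(\A)$. The resulting identity $\Phi_F^S(m) = \Phi_F^{S'}(m_\infty)$ already yields the transformation formula \eqref{change-of-T} as a byproduct.

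It remains to compute $\Phi_F^{S'}(m_\infty)$. By strong approximation I parametrize $U(\Q) \backslash U(\A)$ as $({\rm Sym}_2(\R)/{\rm Sym}_2(\Z)) \times {\rm Sym}_2(\hat\Z)$. The non-archimedean integral
$$
 \int_{{\rm Sym}_2(\hat\Z)} \psi_f\bigl(-{\rm Tr}(S' X_f)\bigr)\,dX_f
$$
vanishes unless $S'$ is semi-integral, in which case it equals $1$. The archimedean integrand is $(F|_k m_\infty)(i 1_2 + X_\infty) = \mu(m_\infty)^k j(m_\infty, i 1_2)^{-k} F(Z_0 + X_\infty)$; inserting \eqref{Four-exp} and applying orthogonality of characters on ${\rm Sym}_2(\R)/{\rm Sym}_2(\Z)$ selects the term $T = S'$ and gives $a(S')\, e^{2\pi i\,{\rm Tr}(S' Z_0)}$ when $S' \in \mathcal P_2$, and zero otherwise (since $F$ is a cuspform, $a(S') = 0$ when $S'$ is not positive definite). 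The cocycle identity for $j$, together with the fact that $k_\infty$ fixes $i 1_2$, combines $j(m_\infty, i 1_2)^{-k}$ and $j(k_\infty, i 1_2)^{-k}$ into $j(g_\infty, i 1_2)^{-k}$, giving \eqref{fourier-relation-eqn}; the case $g = 1$ specializes at once to \eqref{Phi-A-reln}. The argument is essentially bookkeeping, and the one place that requires genuine thought is the invariance of Haar measure on $U(\Q) \backslash U(\A)$ under ${\rm Ad}(m_\Q)$, which is precisely the product formula for $\Q$.
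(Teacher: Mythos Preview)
Your proof is correct and follows essentially the same route as the paper's: peel off $n_0$, $k_\infty$, $k_0$, $m_0$ via \eqref{adelic-fn-props}, conjugate by $m_\Q$ to pass from $\Theta_S$ to $\Theta_{S'}$, and then unfold the Fourier expansion over ${\rm Sym}_2(\Z)\backslash{\rm Sym}_2(\R)$. The only cosmetic difference is in how the semi-integrality of $S'$ is detected: the paper rescales the fundamental domain to $t\,{\rm Sym}_2(\Z)\backslash{\rm Sym}_2(\R)\times\prod_p{\rm Sym}_2(\mathfrak a_p)$ and uses a finite character sum over $t\,{\rm Sym}_2(\Z)\backslash{\rm Sym}_2(\Z)$, whereas you factor the integral as archimedean times non-archimedean and evaluate $\int_{{\rm Sym}_2(\hat\Z)}\psi_f(-{\rm Tr}(S'X_f))\,dX_f$ directly; the two computations are equivalent.
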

\begin{proof}
From (\ref{adelic-fn-props}) and the definition (\ref{Adelic-fourier-defn}), we get
$$
 \Phi_F^{S}(n_0mk_\infty k_0) = \Theta_{S}(n_0) j(k_{\infty}, i 1_2)^{-k}\,\Phi_F^{S}(m_\Q m_\infty).
$$
A change of variable $n \mapsto m_\Q n m_\Q^{-1}$ in (\ref{Adelic-fourier-defn}) gives
$$
 \Phi_F^{S}(m_\Q m_\infty) = \Phi_F^{S'}(m_\infty),
$$
where $S' = v\,^t\!A S A$. For every prime $p$, let $\mathfrak{a}_p \subset \Z_p$ be the largest ideal such that whenever $n \in\prod\limits_{p < \infty} U(\mathfrak{a}_p)$ then $\Theta_{S'}(n) = 1$. Let $t = \prod\limits_p p^{r_p} \in \Z$, where $\mathfrak{a}_p = p^{r_p} \Z_p$. Note that $t=1$ if and only if $S'$ is semi-integral. Since
$$
 ({\rm Sym}_2(\Z)\backslash{\rm Sym}_2(\R))\times\prod\limits_{p<\infty}{\rm Sym}_2(\Z_p)
$$
is a fundamental domain for $U(\Q)\backslash U(\A)$ (where we identify $U\cong{\rm Sym_2})$, so is
$$
 (t\,{\rm Sym}_2(\Z)\backslash{\rm Sym}_2(\R))\times\prod\limits_{p<\infty}{\rm Sym}_2(\mathfrak{a}_p).
$$
Therefore,
\begin{align*}
 \Phi_F^{S'}(m_\infty) &= \bigg(\prod\limits_{p < \infty}\:\int\limits_{U(\mathfrak{a}_p)}1\,dn_p\bigg)
  \int\limits_{t\,{\rm Sym}_2(\Z)\backslash{\rm Sym}_2(\R)} \Phi_F(nm_\infty) \Theta_{S'}^{-1}(n)\,dn \\
 &= \bigg(\prod\limits_{p < \infty}\:\int\limits_{U(\mathfrak{a}_p)}1\,dn_p\bigg)
   \sum\limits_{\eta\in t\,{\rm Sym}_2(\Z)\backslash{\rm Sym}_2(\Z)}\;\int\limits_{{\rm Sym}_2(\Z)\backslash{\rm Sym}_2(\R)} \Phi_F(\eta nm_\infty) \Theta_{S'}^{-1}(\eta n)\,dn \\
 &= \bigg(\prod\limits_{p < \infty}\:\int\limits_{U(\mathfrak{a}_p)}1\,dn_p\bigg)
  \bigg(\sum\limits_{\eta\in t\,{\rm Sym}_2(\Z)\backslash{\rm Sym}_2(\Z)}\Theta_{S'}^{-1}(\eta)\bigg) \int\limits_{U(\Z) \backslash U(\R)} \Phi_F(nm_\infty) \Theta_{S'}^{-1}(n)\,dn.
\end{align*}
Since $\Theta_{S'}^{-1}$ is a non-trivial character of $tU(\Z) \backslash U(\Z)$ if $t > 1$, the sum is zero unless $t=1$. Hence, assume that $t=1$, i.e., $\mathfrak{a}_p = \Z_p$ for all $p$.  By \eqref{adelic automorphic defn},
\begin{align*}
 \Phi_F^{S'}(m_\infty) &=   \int\limits_{{\rm Sym_2(\Z)} \backslash {\rm Sym_2(\R)}} (F |_k \big(\mat{1}{X}{}{1} m_\infty\big))(i1_2) \Theta_{S'}^{-1}(\mat{1}{X}{}{1})\,dX\\
 &= \mu(m_\infty)^k j(m_\infty, i1_2)^{-k} \int\limits_{{\rm Sym_2(\Z)} \backslash {\rm Sym_2(\R)}} F(Z_0 + X) e^{-2 \pi i\,{\rm Tr}(S'X)}\,dX.
\end{align*}
Now, substitute the Fourier expansion of $F$ to get
\begin{align*}
 \Phi_F^{S'}(m_\infty) &= \mu(m_\infty)^k j(m_\infty, i1_2)^{-k} \sum\limits_{T \in \mathcal P_2} a(T) e^{2 \pi i\,{\rm Tr}(TZ_0)} \\
 & \qquad \qquad \times \int\limits_{{\rm Sym_2(\Z)} \backslash {\rm Sym_2(\R)}} e^{2 \pi i\, {\rm Tr}(TX)} e^{-2 \pi i\,{\rm Tr}(S'X)}\,dX.
\end{align*}
If $S' \not\in \mathcal P_2$, then the integral above is zero for every $T$, otherwise it is equal to one exactly for $T = S'$. This proves (\ref{fourier-relation-eqn}). One can obtain (\ref{change-of-T}) by a simple change of variables.
\end{proof}
\section{Special automorphic forms}\label{specialautformsec}
We now assume that $F \in S_k^{(2)}(\Gamma)$ is a Hecke eigenform and is a Saito-Kurokawa lift of $f \in S_{2k-2}(\SL_2(\Z))$, with $k$ even, as in \S6 of \cite{EZ}. Let $\Phi_F$ be as defined in (\ref{adelic automorphic defn}) and let $(\pi_F, V_F)$ be the irreducible cuspidal automorphic representation of $G(\A)$ generated by right translates of $\Phi_F$. Then $\pi_F$ is isomorphic to a restricted tensor product $\otimes_{p\leq\infty}\pi_p$ with irreducible, admissible representations $\pi_p$ of $\GSp_4(\Q_p)$. The following is well-known (see, for example, \cite{Sch}):
\begin{itemize}
 \item The archimedean component $\pi_\infty$ is a holomorphic discrete series representation with scalar minimal $K$-type determined by the weight $k$. Following the notation of \cite{PS1}, we denote this representation by $\mathcal{E}(k,k)$.
 \item For a prime number $p$, the representation $\pi_p$ is a degenerate principal series representation $\chi1_{\GL(2)}\rtimes\chi^{-1}$ with an unramified character $\chi$ of $\Q_p^\times$. Here, we are using the notation of $\cite{RS}$. In particular, $\pi_p$ is a representation of type IIb according to Table A.1 of \cite{RS}. Note that these are non-tempered, non-generic representations.
\end{itemize}
\begin{lemma}\label{localBessellemma}
 Let $p$ be a prime number or $p=\infty$. Let $S\in{\rm Sym}_2(\Q_p)$ be non-degenerate. In the archimedean case, assume also that $S$ is positive or negative definite. Let $T_S$ be the connected component of the stabilizer of the character $\Theta_S$ of $U(\Q_p)$. Explicitly,
 \begin{equation}\label{localBessellemmaeq0}
  T_S=\{g\in\GL_2:\:^tgSg=\det(g)S\},
 \end{equation}
 where we embed $\GL_2$ into $\GSp_4$ via $g\mapsto\mat{g}{}{}{\det(g)\,^tg^{-1}}$.
 Let $V_p$ be a model for $\pi_p$, and consider functionals $\beta_p:\:V_p\to\C$ with the property
 \begin{equation}\label{localBessellemmaeq1}
  \beta_p(\pi_p(n)v)=\Theta_S(n)\beta_p(v)\qquad\text{for all }v\in V_p\text{ and }n\in U(\Q_p).
 \end{equation}
 Then:
 \begin{enumerate}
  \item The space of such functionals $\beta_p$ is one-dimensional.
  \item If $\beta_p$ satisfies \eqref{localBessellemmaeq1}, then it automatically satisfies
   \begin{equation}\label{localBessellemmaeq2}
    \beta_p(\pi_p(m)v)=\beta_p(v)\qquad\text{for all }v\in V_p\text{ and }m\in T_S(\Q_p).
   \end{equation}
 \end{enumerate}
\end{lemma}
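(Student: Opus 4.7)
The plan is to study the twisted coinvariant space
$V_p / \langle \pi_p(n)v - \Theta_S(n)v : n \in U(\Q_p),\, v \in V_p \rangle$
and show that it is one-dimensional, with $T_S(\Q_p)$ acting trivially. Because $T_S$ normalizes $U$ and, by its very definition \eqref{localBessellemmaeq0}, stabilizes the character $\Theta_S$, the space of functionals satisfying \eqref{localBessellemmaeq1} carries a natural $T_S(\Q_p)$-action via $(m \cdot \beta_p)(v) = \beta_p(\pi_p(m^{-1})v)$. Decomposing this finite-dimensional action into characters $\Lambda$ of $T_S(\Q_p)$ recovers, eigenline by eigenline, the spaces of $(\Lambda,\Theta_S)$-Bessel functionals. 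In this language the lemma becomes the single claim that only the trivial character $\Lambda = 1$ appears, and with multiplicity one.

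For a finite prime $p$, the representation $\pi_p$ is of type IIb with unramified inducing data. Sugano's formula \eqref{suganosformulaeq} already exhibits a non-zero $(1,\Theta_S)$-Bessel functional, and the uniqueness discussion in Sect.~6.3 of \cite{RS2} shows that each individual $(\Lambda,\Theta_S)$-Bessel model space is at most one-dimensional. What remains is to rule out all $\Lambda \neq 1$. I would do this by computing the Jacquet module of $\pi_p$ with respect to the Siegel parabolic $P$: for $\chi 1_{\GL(2)} \rtimes \chi^{-1}$ this Jacquet module is explicit (see Table A.3 of \cite{RS}), and its semisimplification, restricted from the Levi $M$ to $T_S$, determines which characters of $T_S$ can occur in the twisted Jacquet module. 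For type IIb with trivial central character and unramified $\chi$, this check yields only $\Lambda = 1$; this is the representation-theoretic shadow of the fact that these CAP representations with respect to $P$ admit only \emph{special} Bessel models.

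For the archimedean place, $\pi_\infty = \mathcal{E}(k,k)$ is the scalar holomorphic discrete series of weight $k$, and the analogous statements are established in \cite{PS1}. For $S$ definite of the correct sign, a non-zero Bessel functional is produced by the classical $S$-th Fourier coefficient map applied to the Siegel modular form associated to a vector, a map which manifestly factors through the trivial character of $T_S(\R)$. The multiplicity-one and vanishing results of \cite{PS1} for $(\Lambda,\Theta_S)$-models on $\mathcal{E}(k,k)$, combined with the fact that the definiteness of $S$ together with the holomorphy of $\pi_\infty$ jointly force the eigencharacter $\Lambda$ to be trivial (any non-trivial $\Lambda$ would be incompatible with the moderate growth of the Fourier--Jacobi expansion of a holomorphic Siegel form), complete the archimedean case.

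The main obstacle is the non-archimedean $T_S$-invariance claim, i.e., the exclusion of non-trivial $\Lambda$. Once the twisted Jacquet module of $\pi_p$ along $U$ is identified as a $T_S(\Q_p)$-module, the lemma reduces to inspecting a short list of characters. Most of the technical content is therefore concentrated in the Jacquet module computation for type IIb representations, which can be extracted from \cite{RS}; the remaining arguments are routine.
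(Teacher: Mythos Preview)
The paper's own proof is just a pair of citations: Lemma~5.2.2 of \cite{RS2} for $p<\infty$ and Theorem~3.10 of \cite{PS} for $p=\infty$. Your attempt to unpack the non-archimedean case contains a real gap. You propose to determine which characters $\Lambda$ of $T_S(\Q_p)$ support a $(\Lambda,\Theta_S)$-Bessel functional by looking at the \emph{untwisted} Jacquet module of $\pi_p$ along $P$ (Table~A.3 of \cite{RS}) and restricting it from $M$ to $T_S$. But the untwisted coinvariants $V_U$ and the twisted coinvariants $V_{U,\Theta_S}$ are unrelated objects: a supercuspidal representation has $V_U=0$ yet can admit many Bessel models, and even for induced representations there is no mechanism that reads the $T_S$-module structure of $V_{U,\Theta_S}$ off from that of $V_U$. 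Table~A.3 computes $V_U$, not $V_{U,\Theta_S}$, so the check you describe does not exclude any $\Lambda$.

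What actually drives the result for type~IIb is that $\pi_p=\chi 1_{\GL(2)}\rtimes\chi^{-1}$ is induced from the Siegel parabolic $P$ itself, with one-dimensional inducing data on the Levi. One computes $V_{U,\Theta_S}$ directly via the $P$-orbit filtration on $P\backslash G$; for a non-degenerate $\Theta_S$ only the open cell survives, and the resulting $T_S$-module is (up to a modulus twist) the restriction to $T_S$ of the inducing $\GL_2$-character $\chi\circ\det$. With trivial central character this restriction is the trivial character of $T_S$, yielding both one-dimensionality and automatic $T_S$-invariance in one stroke. That is essentially the content of the cited lemma in \cite{RS2}. Your instinct to compute a Jacquet module was correct, but it has to be the \emph{twisted} one, and the leverage comes from the Siegel-induced structure of type~IIb, not from Table~A.3.

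For $p=\infty$ your pointer to \cite{PS1} is the right one, but the appeal to ``moderate growth of the Fourier--Jacobi expansion of a holomorphic Siegel form'' is a global heuristic and cannot establish the purely local statement; one must invoke the local uniqueness and invariance results for Bessel functionals on holomorphic discrete series proved in \cite{PS1} directly, as the paper does.
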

\begin{proof}
This follows from Lemma 5.2.2 of \cite{RS2} in the non-archimedean case, and from Theorem 3.10 of \cite{PS} in the archimedean case.
\end{proof}

In the language of Bessel models, Lemma \ref{localBessellemma} states that the only such model admitted by $\pi_p$ is \emph{special}, i.e., with trivial character on $T_S(\Q_p)$; see Sect.\ \ref{besselsec} for the definition of Bessel models in the non-archimedean case, and \cite{PS}, Sect.~2.6, for the definition in the archimedean case. Part i) of Lemma \ref{localBessellemma} asserts the uniqueness of such models. We remark that property ii) in this lemma is precisely the ``$U$-property'' of \cite{PS}.

We fix a distinguished vector $v^0_p\in V_p$ for each of our local representations $\pi_p$: If $p$ is finite, we let $v^0_p$ be a spherical (i.e., non-zero $G(\Z_p)$-invariant) vector, and if $p=\infty$ we let $v^0_p$ be a vector spanning the one-dimensional $K_\infty$-type determined by $k$. Note that the construction of the restricted tensor product $\otimes V_p$ depends on the choice of distinguished vectors almost everywhere, and we use the vectors $v^0_p$ for this purpose.

Let $S\in{\rm Sym}_2(\Q)$ be positive or negative definite. We will see in a later section that $\beta_p(v^0_p)\neq0$ for almost all $p$. For those places where this is the case, we normalize the $\beta_p$ such that $\beta_p(v^0_p)=1$.

The following lemma states that the automorphic forms in the space of $\pi$ are \emph{special} in the sense of \cite{PS}, p.\ 310.
\begin{lemma}\label{extra-inv-lem}
 Let the notations be as above. Then, for any non-degenerate $S\in{\rm Sym}_2(\Q)$, and all $\Phi\in V_F$, we have
 \begin{equation}\label{also-bessel-eqn}
  \Phi^{S}(mg) = \Phi^{S}(g) \qquad \text{ for all } g \in G(\A) \text{ and } m \in T_S(\A).
 \end{equation}
\end{lemma}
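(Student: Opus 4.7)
The plan is to reduce the global claim to the local statement of Lemma~\ref{localBessellemma} via the factorization $V_F \cong \bigotimes'_{p \leq \infty} V_p$. First I would dispose of a trivial case: if $S$ is non-degenerate but not positive definite, then since $F$ is holomorphic its Fourier expansion is supported on $\mathcal{P}_2$, so Proposition~\ref{fourier-relation-prop} yields $\Phi_F^S \equiv 0$; every $\Phi \in V_F$ is a linear combination of right translates of $\Phi_F$, and the Fourier coefficient operator intertwines with right translation in the sense that $(\pi(h)\Phi_F)^S(g) = \Phi_F^S(gh)$, so $\Phi^S \equiv 0$ identically and \eqref{also-bessel-eqn} is automatic. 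From here on assume $S$ is positive definite, so that Lemma~\ref{localBessellemma} applies at every place.

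Next I would define the global Bessel period $B : V_F \to \C$ by $B(\Phi) := \Phi^S(1)$. A change of variables $n \mapsto n n_0^{-1}$ in \eqref{Adelic-fourier-defn} shows $B(\pi_F(n_0)\Phi) = \Theta_S(n_0)\,B(\Phi)$ for all $n_0 \in U(\A)$, so $B$ is a $(U(\A), \Theta_S)$-equivariant functional on $V_F$. Under the isomorphism $V_F \cong \bigotimes'_{p \leq \infty} V_p$, part~i) of Lemma~\ref{localBessellemma} forces the space of such functionals to be one-dimensional, and on a pure tensor $v = \otimes v_p$ we obtain
$$
B(v) = c \prod_p \beta_p(v_p)
$$
for some constant $c \in \C$ and local functionals $\beta_p$ as in the lemma (the product is well-defined since at almost all finite places $\beta_p(v_p^0) = 1$ by Sugano's formula, which gives $B(1) \neq 0$).

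Now I would invoke part~ii) of Lemma~\ref{localBessellemma}: for each $m_p \in T_S(\Q_p)$, one has $\beta_p(\pi_p(m_p)v_p) = \beta_p(v_p)$. Multiplying over all places and extending by linearity gives $B(\pi_F(m)\Phi) = B(\Phi)$ for every $m \in T_S(\A)$ and $\Phi \in V_F$. Finally, for $g \in G(\A)$ and $m \in T_S(\A)$, a direct unwinding of the definitions yields
$$
\Phi^S(mg) = B(\pi_F(mg)\Phi) = B(\pi_F(m)\pi_F(g)\Phi) = B(\pi_F(g)\Phi) = \Phi^S(g),
$$
which is precisely \eqref{also-bessel-eqn}.

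The main obstacle is the careful bookkeeping in passing from the abstract tensor product decomposition $V_F \cong \bigotimes'_p V_p$ to the genuine factorization $B = c\prod_p \beta_p$ on pure tensors; this requires uniqueness of the local Bessel functionals at every place (including the archimedean one, via Theorem~3.10 of \cite{PS}) and the non-vanishing of the local spherical Bessel values at almost all finite primes. Once these ingredients are assembled, the $T_S(\A)$-invariance of the global period is an immediate consequence of the ``$U$-property'' contained in Lemma~\ref{localBessellemma}(ii), and the lemma follows.
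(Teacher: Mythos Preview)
Your argument follows essentially the same route as the paper's: define the global period $\beta(\Phi)=\Phi^S(1)$, factorize it as $C_S\prod_p\beta_p$ via the local uniqueness of Lemma~\ref{localBessellemma}(i), then invoke the automatic $T_S$-invariance from part~(ii) and replace $\Phi$ by $\pi_F(g)\Phi$.

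One small correction to your handling of the trivial case: the claim that $\Phi_F^S\equiv 0$ whenever $S$ is not positive definite fails for \emph{negative} definite $S$, since in Proposition~\ref{fourier-relation-prop} one may choose $v<0$ so that $S'=v\,{}^t\!ASA\in\mathcal{P}_2$. The paper avoids this case split altogether: it simply assumes the global functional $\beta$ is non-zero (otherwise there is nothing to prove) and then appeals to Corollary~4.2 of \cite{PS1} at the archimedean place to force $S$ to be definite of either sign, after which Lemma~\ref{localBessellemma} applies directly. This is a cosmetic repair rather than a gap in your core argument.
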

\begin{proof}
We fix $S$. The assertion is trivial if the functional
$$
 \beta:\:V\longrightarrow\C,\qquad\beta(\Phi):=\Phi^S(1)=\int\limits_{U(\Q)\backslash U(\A)}\Phi(n)\Theta_S^{-1}(n)\,dn
$$
is zero. Assume that $\beta$ is non-zero. For each place $p$, let $V_p$ be a model for $\pi_p$. By a standard argument, $\beta$ induces a non-zero functional $\beta_p:\:V_p\to\C$ with the property \eqref{localBessellemmaeq1}. Looking at the archimedean place, Corollary 4.2 of \cite{PS1} implies that $S$ is positive or negative definite. By the uniqueness asserted in Lemma \ref{localBessellemma}, it follows that there exists a non-zero constant $C_S$ such that
\begin{equation}\label{extra-inv-lemeq1}
 \beta(\Phi)=C_S\prod\limits_{p\leq\infty}\beta_p(v_p),
\end{equation}
whenever $\Phi\in V_F$ corresponds to the pure tensor $\otimes v_p$ via $V_F\cong\otimes V_p$; note that the product on the right is finite by our normalizations. Using ii) of Lemma \ref{localBessellemma}, it follows that $\beta(\pi(m)\Phi)=\beta(\Phi)$ for all $m\in T_S(\A)$. Since $\Phi$ is arbitrary, this implies the assertion of the lemma.
\end{proof}
\section{A proof of the classical Maass relations}
For $S = \mat{a}{b/2}{b/2}{c}\in \mathcal{P}_2$ we denote $c(S)=\gcd(a,b,c)$ (resp.\ $\disc(S)=-4\det(S)$) and call $c(S)$ the content (resp., call $\disc(S)$ the discriminant) of $S$. For $S_1, S_2 \in \mathcal{P}_2$, we say that $S_1\sim S_2$ if there exists a matrix $A \in\SL_2(\Z)$ such that $^t\!A S_1 A = S_2$. For any $S \in \mathcal{P}_2$, let $[S]$ denote the equivalence class of $S$ under the above relation; note that all matrices in a given equivalence class have the same content and discriminant. For any discriminant $D < 0$ (recall that a discriminant is an integer congruent to 0 or 1 modulo 4) and any positive integer $L$, we let $H(D; L)$ denote the set of equivalence classes of matrices in $\mathcal{P}_2$ whose content is equal to $L$ and whose discriminant is equal to $DL^2$. In particular, if $S \in \mathcal{P}_2$, then $[S] \in H(D; L)$ where $L=c(S)$ and $D=\disc(S)/c(S)^2$. It is clear that the map $[S] \mapsto [LS]$ gives a bijection of sets $H(D;1) \rightarrow H(D;L)$.

Our objective in this section is to prove the following theorem.

\begin{theorem}\label{t:globalmain}
 Let $F$ be a cuspidal Siegel Hecke eigenform of weight $k$ with respect to $\Symp_4(\Z)$. For $S \in \mathcal P_2$, let $a(S)$ denote the Fourier coefficient of $F$ at $S$. Suppose that $F$ is a Saito-Kurokawa lift.  Then the following hold.
 \begin{enumerate}
  \item Assume that $\disc(S_1)=\disc(S_2)$ and $c(S_1)=c(S_2)$. Then $a( S_1)=a(S_2)$.
  \item \label{part2} For $D<0$ a discriminant and $L$ a positive integer, define $a(D;L)=a( S)$ where $S$ is any member of $\mathcal P_2$ satisfying $c(S)=L$, $\disc(S)=DL^2$; this is well-defined by the previous part. Then the following relation holds:
   \begin{equation}\label{maass-rels-eqn2b}
    a(D;L) = \sum\limits_{r | L} r^{k-1}\,a(D(L/r)^2;1).
   \end{equation}
 \end{enumerate}
\end{theorem}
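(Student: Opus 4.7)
The plan is to derive the classical Maass relation directly from the local Maass relation of Theorem~\ref{localmaasstheorem}, using Proposition~\ref{fourier-relation-prop} and Lemma~\ref{extra-inv-lem} as the dictionary between Bessel values and classical Fourier coefficients. Since $F$ is a Saito--Kurokawa lift, every non-archimedean $\pi_p$ is of type IIb, so by Theorem~\ref{localmaasstheorem} its normalized spherical Bessel function $B_p$ satisfies the local Maass identity \eqref{maass-rels-thmeq3c}.

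The core computation is a local--global dictionary. Let $T=\mat{a}{b/2}{b/2}{c}\in\mathcal P_2$ satisfy the standard assumptions \eqref{standardassumptionseq} at a fixed prime $p$, and let $g\in G(\A)$ be the adele equal to $h(l,m)\in\GSp_4(\Q_p)$ at $p$ and to $1$ elsewhere. Setting $m_\Q=\mathrm{diag}(p^{l+2m},p^{l+m},1,p^m)\in M(\Q)$ and decomposing $g$ as in Proposition~\ref{fourier-relation-prop}, a direct calculation yields $S'=v\,{}^tATA=\mat{p^{l+2m}a}{p^{l+m}b/2}{p^{l+m}b/2}{p^lc}$, $Z_0=i\,\mathrm{diag}(p^{-(l+2m)},p^{-l})$, and $\mathrm{Tr}(S'Z_0)=i\,\mathrm{Tr}(T)$; hence \eqref{fourier-relation-eqn} gives $\Phi_F^T(g)=p^{-k(l+m)}\,a(S')\,e^{-2\pi\mathrm{Tr}(T)}$. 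Independently, the factorization of the global Bessel functional as a product of local ones (formula \eqref{extra-inv-lemeq1} in the proof of Lemma~\ref{extra-inv-lem}) gives $\Phi_F^T(g)=B_p(h(l,m))\,\Phi_F^T(1)=B_p(h(l,m))\,a(T)\,e^{-2\pi\mathrm{Tr}(T)}$. Equating produces the key identity
\[
 a(S')\;=\;p^{k(l+m)}\,B_p(h(l,m))\,a(T).
\]

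For Part~(ii), assuming Part~(i): write $L=\prod_p p^{l_p}$ and fix a primitive $T_0$ of discriminant $D$, adjusted by $\SL_2(\Z)$ so that the standard assumptions hold at every $p\mid L$. At a single prime $p$ with $l=v_p(L)$, applying the local Maass relation $B_p(h(l,0))=\sum_{i=0}^l p^{-i}B_p(h(0,l-i))$ and the key identity (once at $(l,0)$ and once at $(0,l-i)$) yields
\[
 a(p^lT_0)\;=\;\sum_{i=0}^l p^{i(k-1)}\,a(T_{0,\,l-i}),
\]
where $T_{0,j}$ is the matrix of $p$-content $1$ and discriminant $p^{2j}D$ arising from $h(0,j)\cdot T_0$. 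By Part~(i), $a(T_{0,j})=a(p^{2j}D;1)$, giving the Maass relation for $L=p^l$. For general $L$, I would iterate this at each prime dividing $L$ in turn, using Part~(i) each time to identify the intermediate coefficients by their (discriminant, content) data; the multiplicativity of $r\mapsto r^{k-1}$ assembles these single-prime relations into $a(D;L)=\sum_{r\mid L}r^{k-1}a(D(L/r)^2;1)$.

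For Part~(i): given $S_1,S_2$ with the same discriminant and content, they have the same determinant and lie in the same genus, so there exist $A\in\GL_2(\Q)$, $v\in\Q^\times$ with $S_2=v\,{}^tAS_1A$, and locally at each place $A$ admits a factorization through $T_{S_1}$ and the maximal compact $\SL_2(\Z_p)$ (resp.\ $\SL_2(\R)^+$). By \eqref{change-of-T}, $\Phi_F^{S_2}(1)=\Phi_F^{S_1}(\mathbf m)$ with $\mathbf m=\mat{A}{}{}{v^{-1}\,{}^tA^{-1}}\in G(\Q)$. The main obstacle is to decompose $\mathbf m$ adelically as $t\cdot k_\infty\cdot k_0$ with $t\in T_{S_1}(\A)$, $k_\infty\in K_\infty$, $k_0\in K_0$; this requires navigating the (finite but possibly nontrivial) class set of the torus $T_{S_1}$. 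Granted this, Lemma~\ref{extra-inv-lem} together with the transformation law \eqref{adelic-fn-props} reduces $\Phi_F^{S_1}(\mathbf m)$ to an explicit scalar multiple of $\Phi_F^{S_1}(1)$, and unwinding the prefactors via \eqref{Phi-A-reln} gives $a(S_1)=a(S_2)$. An alternative route, in the spirit of the paper's second proof, is to invoke the result of \cite{KST} expressing $|a(S)|^2$ as a product of local Bessel integrals that manifestly depend only on the local genus data; this bypasses the class-set bookkeeping.
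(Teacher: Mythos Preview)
Your Part~(i) has a genuine gap at the archimedean place. You want to write the rational element $\mathbf m=\mat{A}{}{}{v^{-1}\,{}^tA^{-1}}\in G(\Q)\subset G(\A)$ as $t\cdot k_\infty\cdot k_0$ with $t\in T_{S_1}(\A)$. At each finite place this is fine (same genus), but at infinity $\mathbf m_\infty\in M(\R)$ forces $k_\infty\in M(\R)\cap K_\infty$, which is essentially $O(2)$ embedded diagonally; then you would need $A\in T_{S_1}(\R)\cdot O(2)$, a $3$-dimensional set inside the $4$-dimensional $\GL_2(\R)$, so this fails for generic $A$. The paper sidesteps this entirely: it fixes the base matrix $S_d$ with fundamental discriminant, establishes a bijection $\Cl_d(M)\to H(dM^2;L)$ via $c\mapsto[\phi_{L,M}(c)]$, and then evaluates $\Phi_F^{S_d}$ at the \emph{finite} adeles $t_c\,(H_{L,M})_f$ (trivial at $\infty$). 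Lemma~\ref{extra-inv-lem} gives $\Phi_F^{S_d}(t_{c_1}g)=\Phi_F^{S_d}(t_{c_2}g)$, and Proposition~\ref{fourier-relation-prop} (with the archimedean piece entering only through $m_\infty$, computed uniformly) converts both sides to the desired Fourier coefficients. Your alternative suggestion to invoke \cite{KST} for ``$|a(S)|^2$ as a product of local Bessel integrals'' mischaracterizes that result: Theorem~\ref{t:relationkst} expresses a $\Lambda$-twisted \emph{average} of Fourier coefficients over a class set, not a single $|a(S)|^2$, and in fact the paper uses it for Part~(ii), not Part~(i).

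For Part~(ii), your approach is valid and is essentially the paper's \emph{second} proof (Theorem~\ref{maass-rels-thm}), not the proof immediately following the statement. That first proof applies Theorem~\ref{t:relationkst} with $\Lambda=1$ to get $a(dM^2;L)=(LM)^k a(d;1)\prod_{p\mid LM}B_p(h(l_p,m_p))$ in one stroke, then reduces \eqref{maass-rels-eqn2b} to the local identity $B_p(h(l_p,m_p))=\sum_i p^{-i}B_p(h(0,l_p+m_p-i))$ from Theorem~\ref{localmaasstheorem}. Your key identity $a(S')=p^{k(l+m)}B_p(h(l,m))\,a(T)$ (derived via \eqref{extra-inv-lemeq1} and Proposition~\ref{fourier-relation-prop}) is exactly the mechanism behind the second proof; the paper's version handles all primes dividing $L$ simultaneously via Lemma~\ref{finding-lm} and a product over $p\mid L$, which is cleaner than your prime-by-prime iteration but equivalent in content.
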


\begin{corollary}[Maass relations]
Let $F$ be a cuspidal Siegel Hecke eigenform of weight $k$ with respect to $\Symp_4(\Z)$ which is a Saito-Kurokawa lift. For $S \in \mathcal P_2$, let $a(S)$ denote the Fourier coefficient of $F$ at $S$. Then
 \begin{equation}\label{maass-rels-eqn-3}
  a(\mat{a}{\frac b2}{\frac b2}{c}) = \sum\limits_{r | {\rm gcd}(a,b,c)} r^{k-1} a(\mat{\frac{ac}{r^2}}{\frac{b}{2r}}{\frac b{2r}}{1}).
 \end{equation}
\end{corollary}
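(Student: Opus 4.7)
The plan is to derive the corollary as an immediate translation of Theorem \ref{t:globalmain} into the classical indexing of Fourier coefficients by individual matrices. Fix $S = \mat{a}{b/2}{b/2}{c}\in\mathcal{P}_2$, and set $L := \gcd(a,b,c) = c(S)$ and $D := (b^2-4ac)/L^2$, so that $\disc(S) = DL^2$ and $[S] \in H(D;L)$. For each divisor $r$ of $L$, I would then consider the matrix $S_r := \mat{ac/r^2}{b/(2r)}{b/(2r)}{1}$, with the aim of showing that $[S_r]\in H(D(L/r)^2;1)$ so that Theorem \ref{t:globalmain} applies to both sides of the desired identity.

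The next step is to verify that $S_r \in \mathcal{P}_2$ and to compute its invariants. Since $r \mid \gcd(a,b,c)$, we have $r^2 \mid ac$ and $r \mid b$, so $ac/r^2 \in \Z$ and $b/(2r) \in \tfrac12\Z$; positive definiteness of $S_r$ follows from positive definiteness of $S$ together with $\det(S_r) = \det(S)/r^2$. The content of $S_r$ is $1$ because $1$ appears on the diagonal, and the discriminant is $\disc(S_r) = -4\det(S_r) = (b^2-4ac)/r^2 = D(L/r)^2$.

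With these identifications, part (i) of Theorem \ref{t:globalmain} guarantees that $a(D;L)$ and each $a(D(L/r)^2;1)$ are well-defined, and gives $a(S) = a(D;L)$ and $a(S_r) = a(D(L/r)^2;1)$. Part (ii) then yields
\[
a(D;L) = \sum_{r\mid L} r^{k-1}\, a(D(L/r)^2;1),
\]
and substituting the previous identifications converts this directly into \eqref{maass-rels-eqn-3}.

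In this sense there is no real obstacle in this last step: all of the representation-theoretic and analytic content — the local Bessel computations of Theorem \ref{localmaasstheorem}, the identification of the local components of $\pi_F$ as type IIb, and the translation from Bessel values to Fourier coefficients via Proposition \ref{fourier-relation-prop} — has already been absorbed into Theorem \ref{t:globalmain}, so that the classical Maass relations emerge from a purely matrix-level bookkeeping exercise. The main work, both conceptual and computational, lies upstream in establishing part (ii) of that theorem, not in deducing this corollary.
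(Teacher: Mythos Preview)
Your proposal is correct and follows essentially the same route as the paper: both identify $a\big(\mat{a}{b/2}{b/2}{c}\big)=a\big((b^2-4ac)/\gcd(a,b,c)^2;\gcd(a,b,c)\big)$ and $a\big(\mat{ac/r^2}{b/(2r)}{b/(2r)}{1}\big)=a\big((b^2-4ac)/r^2;1\big)$, and then read off the Maass relation directly from \eqref{maass-rels-eqn2b}. Your additional verification that $S_r\in\mathcal P_2$ with content $1$ and discriminant $D(L/r)^2$ makes explicit what the paper leaves implicit, but the argument is the same.
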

\begin{proof}
In the notation of the above Theorem, $$a(\mat{a}{\frac b2}{\frac b2}{c}) = a((b^2-4ac)/{\rm gcd}(a,b,c)^2;{\rm gcd}(a,b,c)), \quad a(\mat{\frac{ac}{r^2}}{\frac{b}{2r}}{\frac b{2r}}{1})=a((b^2-4ac)/r^2;1).$$ Now the corollary follows immediately from~\eqref{maass-rels-eqn2b}.
\end{proof}

Let us now prove Theorem~\ref{t:globalmain}. As a first step, we recall a very useful characterization of the elements of $H(D;L)$. Let $d<0$ be a \emph{fundamental discriminant}\footnote{Recall that an integer $n$ is called a fundamental discriminant if $n$ is the discriminant of the field $\Q(\sqrt{n})$. This is equivalent to saying that \emph{either} $n$ is a squarefree integer congruent to 1 modulo 4 \emph{or} $n = 4m$ where $m$ is a squarefree integer congruent to 2 or 3 modulo 4.}. We define $S_d \in \mathcal{P}_2$ as follows.
\begin{equation}\label{defsd}
 S_d = \begin{cases}
  \mat{\frac{-d}{4}}{0}{0}{1}& \text{ if } d\equiv 0\pmod{4}, \\[4ex]
  \mat{\frac{1-d}{4}}{\frac12}{\frac12}{1}& \text{ if } d\equiv 1\pmod{4}.\end{cases}
\end{equation}

For any positive integer $M$, we let $K_p^{(0)}(M)$ be the subgroup of $\GL_2(\Z_p)$
consisting of elements that are congruent to
$$
 \mat{\ast}{0}{\ast}{\ast} \pmod{M}.
$$
We define $\Cl_d(M)$ as follows:
$$
 \Cl_d(M) = T_d(\A) /T_d(\Q)T_d(\R)\Pi_{p<\infty} (T_d(\Q_p) \cap K_p^{(0)}(M)).
$$
where
\begin{equation}\label{localBessellemmaeq0b}
 T_d=\{g\in\GL_2:\:^tgS_dg=\det(g)S_d\}.
\end{equation}
It is easy to see that $\Cl_d(M)$ is finite. For example, $\Cl_d(1)$ is canonically isomorphic to the ideal class group of $\Q(\sqrt{d})$. Let $c \in \Cl_d(M)$ and let $t_c \in \prod_{p<\infty}T_d(\Q_p)$ be a representative for $c$. By strong approximation, we can write (non-uniquely)
\begin{equation}\label{strongapproxeq}
 t_{c} = \gamma_{c}\,\gamma_{c,\infty}^{-1}\,\kappa_{c}
\end{equation}
with $\gamma_{c} \in \GL(2,\Q)^+$, and $\kappa_{c}\in \Pi_{p<\infty} K_p^{(0)}(M)$.
\par
It is known (see~\cite[p.\ 209]{Fu}) that $S_c:=\det(\gamma_{c})^{-1}(^t{\gamma_{c}})S_d\gamma_{c} \in \mathcal{P}_2$ and satisfies $c(S_c) = 1$, $\disc(S_c)=d$. Also, the $(2,2)$-coefficient of $S_c$ is $1$ modulo $M$. For any positive integer $L$, we define
\begin{equation}\label{philmdef}
 \phi_{L,M}(c) =\mat{L}{}{}{L}\mat{M}{}{}{1}S_c\mat{M}{}{}{1}.
\end{equation}
It follows that $c(\phi_{L,M}(c))=L$, $\disc(\phi_{L,M}(c))=dL^2M^2$. We remark here that the matrix $\phi_{L,M}(c)$ depends on our choice of representative $t_c$ as well as on our choice of the matrix $\gamma_c$ involved in strong approximation. However, the equivalence class $[\phi_{L,M}(c)]$ is independent of these choices. In fact, we have the following proposition.

\begin{proposition}For each pair of positive integers $L,M$, the map $c \mapsto [\phi_{L,M}(c)]$ gives a well-defined bijection between $\Cl_d(M)$ and $H(dM^2; L)$.
\end{proposition}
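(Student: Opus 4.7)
The plan has three main steps: (1) reduce to $L=1$ via the scaling bijection; (2) verify that $[\phi_{1,M}(c)]$ is independent of the representative $t_c$ and the factorization \eqref{strongapproxeq}; (3) deduce bijectivity from the classical identification of $\Cl_d(M)$ with the ideal class group of the order of conductor $M$ in $L=\Q(\sqrt d)$.

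\emph{Reduction.} The bijection $[S]\mapsto[LS]$ from $H(dM^2;1)$ to $H(dM^2;L)$, noted earlier in this section, intertwines $\phi_{1,M}$ with $\phi_{L,M}$. So we may assume $L=1$.

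\emph{Well-definedness.} Set $D=\mat{M}{}{}{1}$, so that $\phi_{1,M}(c)=DS_cD$. Combining both sources of ambiguity, any two admissible choices of $\gamma_c$ differ by right multiplication by an element of $T_d(\Q)$ which at the finite places lies in $\prod_p(T_d(\Q_p)\cap K_p^{(0)}(M))$ and at infinity in $\GL_2(\R)^+$. Since $K_p^{(0)}(M)\subset\GL_2(\Z_p)$, this element $\eta$ lies in $\GL_2(\Z)$, has $\det(\eta)=1$, and satisfies $\eta\equiv\mat{\ast}{0}{\ast}{\ast}\pmod M$. Setting $A:=D^{-1}\eta D\in\SL_2(\Z)$, a direct computation yields
\[
 DS_c'D={}^tA\,(DS_cD)\,A,
\]
so $[\phi_{1,M}(c)]$ is well-defined.

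\emph{Bijectivity.} The adelic quotient $\Cl_d(M)$ is canonically isomorphic to the ideal class group of the order $\mathcal{O}_{d,M}:=\Z+M\mathcal{O}_L$, and the classical Gauss correspondence identifies this class group with $H(dM^2;1)$. The map $c\mapsto[\phi_{1,M}(c)]$ is a reformulation of this bijection. Injectivity is the reverse of the Step 2 calculation: an $\SL_2(\Z)$-equivalence $\,^tA(DS_{c_1}D)A=DS_{c_2}D$ produces $\eta:=DAD^{-1}\in T_d(\Q)\cap\prod_p K_p^{(0)}(M)$ linking $t_{c_1}$ to $t_{c_2}$ in $\Cl_d(M)$. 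Surjectivity reduces to the classical fact that every primitive $S\in H(dM^2;1)$ is $\SL_2(\Z)$-equivalent to some $DS'D$ with $S'\in H(d;1)$; strong approximation then supplies the corresponding $c$.

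\emph{Main obstacle.} The technical crux is surjectivity, specifically the observation that every primitive $S\in H(dM^2;1)$ represents some integer coprime to $M$, so that its $\SL_2(\Z)$-class contains a matrix with $(2,2)$-entry coprime to $M$. This reduction is the only non-formal step; everything else is bookkeeping tying the adelic description of $\Cl_d(M)$ to the matrix formula defining $\phi_{1,M}$.
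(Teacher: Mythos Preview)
Your injectivity step contains a genuine error. From $^tA(DS_{c_1}D)A=DS_{c_2}D$ you correctly obtain $\eta:=DAD^{-1}$ satisfying $^t\eta\,S_{c_1}\,\eta=S_{c_2}$, but this does \emph{not} place $\eta$ in $T_d(\Q)$: the defining relation for $T_d$ is $^tg S_d g=\det(g)S_d$, and $S_{c_1},S_{c_2}$ are in general different from $S_d$ and from each other. What actually lies in $T_d(\Q)$ is $\gamma_{c_2}\,\eta\,\gamma_{c_1}^{-1}$, obtained by unwinding the definition $S_{c_i}=\det(\gamma_{c_i})^{-1}\,{}^t\gamma_{c_i}S_d\gamma_{c_i}$. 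Moreover, before you can even form $\eta\in\prod_p K_p^{(0)}(M)$ you must check that $A\in\Gamma_0(M)$; this is not automatic and requires a short computation with the entries (using that the $(2,2)$-entries of $S_{c_1},S_{c_2}$ are units modulo $M$), which you have skipped. The same left/right confusion appears in your well-definedness paragraph: two admissible $\gamma_c$'s are related by $\gamma_c^{(2)}=t\,\gamma_c^{(1)}\,k$ with $t\in T_d(\Q)$ on the \emph{left} and $k\in\Gamma^0(M)$ on the \emph{right}, not by a single right multiplication by an element of $T_d(\Q)$.

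For surjectivity you take a different route from the paper. The paper does not construct preimages at all: having proved injectivity, it simply quotes the classical formula
\[
 |H(dM^2;1)|=\frac{M}{u(d)}\,|\Cl_d(1)|\prod_{p\mid M}\Big(1-\Big(\frac dp\Big)p^{-1}\Big)
\]
together with the matching formula for $|\Cl_d(M)|$, and concludes by cardinality. Your constructive approach via the Gauss correspondence is legitimate in principle, but the ``main obstacle'' you isolate is not the whole story: arranging the $(2,2)$-entry of $S$ to be coprime to $M$ does not by itself give $S=DS'D$ with $S'$ half-integral; you still need $M\mid\beta$ and $M^2\mid\alpha$, which requires a further $\SL_2(\Z)$-transformation and a small argument. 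You then also need to know that every $[S']\in H(d;1)$ with suitable $(2,2)$-entry arises as some $S_c$. The counting argument bypasses all of this bookkeeping.
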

\begin{proof} Let us first show that the map is well defined. Let $\Gamma^{0}(M)$ (resp. $\Gamma_{0}(M)$) be the usual congruence subgroups of $\SL_2(\Z)$ consisting of matrices whose upper-right (resp. lower-left) entry is divisible by $M$. Let $c \in \Cl_d(M)$. Suppose that $\gamma_c^{(1)}$, $\gamma_c^{(2)}$ are two distinct elements obtained in~\eqref{strongapproxeq} from $c$ and that $\phi_{L,M}^{(1)}(c)$, $\phi_{L,M}^{(2)}(c)$ are the matrices obtained via~\eqref{philmdef}. Then our definitions imply that there exists $t \in T(\Q)$, $k \in \Gamma^{0}(M)$ such that $\gamma_c^{(2)} = t \gamma_c^{(1)} k$. It follows immediately that $ \phi_{L,M}^{(2)}(c) = {}^tR\phi_{L,M}^{(1)}(c) R$
where
$$
 R=\mat{M^{-1}}{}{}{1}k\mat{M}{}{}{1}\in\Gamma_{0}(M)\subset\SL_2(\Z).
$$
Hence $[\phi_{L,M}^{(2)}(c)] = [\phi_{L,M}^{(1)}(c) ]$.

Next, we show that the map $c \mapsto [\phi_{L,M}(c)]$ is injective. Suppose that $[\phi_{L,M}(c_1)] = [\phi_{L,M}(c_2)]$. Then there exists $A\in \SL_2(\Z)$ such that ${}^tA \phi_{L,M}(c_2) A =  \phi_{L,M}(c_1)$. An easy calculation involving the entries of the matrices shows that $A \in \Gamma_0(M)$. Then $R =\mat{M}{}{}{1}A\mat{M^{-1}}{}{}{1}\in \Gamma^{0}(M)$ and ${}^tR S_{c_2} R =  S_{c_1}.$ Let $t_1 = \gamma_1\gamma_{1,\infty}^{-1}\kappa_{1}$ and $t_2 = \gamma_2\gamma_{2,\infty}^{-1}\kappa_{2}$ be our chosen representatives in $\prod_{p<\infty}T(\Q_p)$ of $c_1$ and $c_2$, respectively. Then
$$
 \gamma_2R\gamma_1^{-1} \in T_d(\Q) \cap t_2\GL_2(\R)^{+}\prod_{p}K_p^{(0)}(M)t_1^{-1}.
$$
It follows that  $t_1$ and $t_2$ represent the same element of $\Cl_d(M)$, completing the proof.

Finally, we show that the map $c \mapsto [\phi_{L,M}(c)]$ is surjective. Since we have already proved injectivity, it is enough to show that $|\Cl_d(M)| = |H(dM^2; L)|.$ Now it is a classical fact (see e.g.~\cite[p.\ 217]{Co}) that
$$
 |H(dM^2; L)| = |H(dM^2; 1)| = \frac{M}{u(d)}\,|\Cl_d(1)|\,\prod_{p|M} \left( 1 -  \Big(\frac{d}p\Big) p^{-1} \right),
$$
where $u(-3)=3$, $u(-4)=2$ and $u(d)=1$ for other $d$. On the other hand, a  simple argument along the lines of~\cite[p.\ 68]{S-th}, shows that
$$
 |\Cl_d(M)| = \frac{M}{u(d)}\,|\Cl_d(1)|\,\prod_{p|M} \left( 1 -  \Big(\frac{d}p\Big) p^{-1} \right).
$$
This completes the proof.
\end{proof}

We now return to the proof of Theorem~\ref{t:globalmain}. In order to prove the first assertion, it is enough to prove that (for some fixed fundamental discriminant $d<0$, fixed positive integers $L$, $M$ and fixed elements $c_1, c_2 \in \Cl_d(M)$) that \begin{equation}\label{fourierinveq1}
 a(\phi_{L,M}(c_1)) = a(\phi_{L,M}(c_2)).
\end{equation}
Let $t_1 \in \prod_{p<\infty}T_d(\Q_p)$ be a representative for $c_1$ and let $t_2 \in \prod_{p<\infty}T_d(\Q_p)$ be a representative for $c_2$. By Lemma~\ref{extra-inv-lem}, it follows that
\begin{equation}\label{fourierinveq2}
 \Phi_F^{S_d}(t_1g) = \Phi_F^{S_d}(t_2g) \qquad \text{ for all } g \in G(\A).
\end{equation}
Define the element $(H_{L,M})_f
\in \prod_{p<\infty}G(\Q_p)$ to be the diagonal embedding in $\prod_{p<\infty}G(\Q_p)$ of the element
$$
 H_{L,M} = \begin{bmatrix}LM^2&&&\\&LM&&\\&&1&\\&&&M\end{bmatrix}.
$$
Let us explicate~\eqref{fourierinveq2} in the special case $g=(H_{L,M})_f.$ Note that for $i=1,2$, we have
$$
 t_i (H_{L,M})_f = \gamma_{c_i} (H_{L,M})_\Q (H_{L,M})_\infty^{-1} (\gamma_{c_i})_\infty^{-1} (H_{L,M})_f^{-1}\kappa_{c_i}(H_{L,M})_f,
$$
where we note that $\gamma_{c_i} (H_{L,M})_\Q \in M(\Q)$ and $(H_{L,M})_\infty^{-1} (\gamma_{c_i})_\infty^{-1} \in M(\R)^+$. Furthermore, the element $(H_{L,M})_f^{-1}\kappa_{c_i}(H_{L,M})_f$ lies in the group $K_0$ defined after \eqref{spin strong approx}. It follows from Proposition~\ref{fourier-relation-prop} that
\begin{equation}\label{fourierinveq3}
 \begin{split}&\Phi_F^{S_d}(t_1(H_{L,M})_f) = e^{-2\pi\,{\rm Tr}(S_d)}(LM)^{-k} a(\phi_{L,M}(c_1)), \\ & \Phi_F^{S_d}(t_2(H_{L,M})_f) = e^{-2\pi\,{\rm Tr}(S_d)}(LM)^{-k} a(\phi_{L,M}(c_2)). \end{split}
\end{equation}
Combining~\eqref{fourierinveq2} and~\eqref{fourierinveq3}, we deduce~\eqref{fourierinveq1}. This completes the proof of the first assertion of Theorem~\ref{t:globalmain}. To prove the second assertion of Theorem~\ref{t:globalmain}, we need the following result, which is Theorem 2.10 of \cite{KST}.

\begin{theorem}[Kowalski--Saha--Tsimerman]\label{t:relationkst}
  Let $d<0$ be a fundamental discriminant and $\Lambda=\prod_p \Lambda_p$ be a character of $\Cl_d(1)$ (note that $\Lambda$ induces a character on $\Cl_d(M)$ for all positive integers $M$ via the natural surjection $\Cl_d(M) \rightarrow \Cl_d(1)$). Let $F$ be a cuspidal Siegel Hecke eigenform of weight $k$ with respect to $\Symp_4(\Z)$ and let $\pi=\otimes_p \pi_p$ be the irreducible cuspidal representation of $\GSp_4(\A)$ attached to $F$. For $S \in \mathcal P_2$, let $a(S)$ denote the Fourier coefficient of $F$ at $S$. For each prime $p$, let $B_p$ be the spherical vector in the $(S_d, \Lambda_p, \theta_p)$-Bessel model for $\pi_p$, normalized so that $B_p(1)=1$. Then for any positive integers $L = \prod_p p^{l_p}$ and $M= \prod_p p^{m_p}$ the relation
  $$
   \frac{1}{|\Cl_d(M)|}\sum_{c\in \Cl_d(M)}
   \overline{\Lambda(c)}a(\phi_{L,M}(c)) =
   \bigg(\frac{(LM)^{k}}{|\Cl_d(1)|}\sum_{c\in\Cl_d(1)} \overline{\Lambda(c)}a(S_c)\bigg)
   \prod_{p\mid LM}B_{p}(h(l_p, m_p))
  $$
  holds.
\end{theorem}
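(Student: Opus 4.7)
My plan is to interpret both sides of the identity as values of a single global $\Lambda$-twisted Bessel period for $\Phi_F$, exploiting the fact that such a period factorizes as a product of local Bessel functions by the uniqueness of local Bessel models. The left-hand side will emerge from evaluating this period at the adelic element $(H_{L,M})_f$, and the right-hand side (in particular the normalization by $|\Cl_d(1)|$ rather than $|\Cl_d(M)|$) will emerge from evaluating the same period at the identity, where the periods collapse to a sum over $\Cl_d(1)$.

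More concretely, for each choice of representatives $\{t_c : c \in \Cl_d(M)\}$ in $\prod_{p<\infty} T_d(\Q_p)$ and any $g \in G(\A)$ that is right-invariant under $\prod_{p<\infty}(T_d(\Q_p) \cap K_p^{(0)}(M))$, I would form
\begin{equation*}
 \mathcal{B}^\Lambda(g) := \sum_{c \in \Cl_d(M)} \overline{\Lambda(c)}\, \Phi_F^{S_d}(t_c g).
\end{equation*}
Viewed as a functional on the space of $\pi_F$ (letting $\Phi_F$ vary), $\mathcal{B}^\Lambda$ is a non-zero scalar multiple of a global $(\Lambda, \theta_{S_d})$-Bessel functional. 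By the local uniqueness of $(\Lambda_p, \theta_p)$-Bessel functionals (the archimedean version being Theorem~3.10 of \cite{PS}, the $p$-adic version being from Sect.~6.3 of \cite{RS2}), restricted to the pure tensor $\Phi_F \leftrightarrow \otimes_p v_p^0$ one obtains the factorization
\begin{equation*}
 \mathcal{B}^\Lambda(g) = \kappa_\Lambda\, \beta_\infty(\pi_\infty(g_\infty)v_\infty^0)\prod_{p<\infty} B_p(g_p),
\end{equation*}
where $B_p$ is the normalized spherical vector in the $(\Lambda_p,\theta_p)$-Bessel model and $\kappa_\Lambda$ is a global constant independent of $g$.

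I would then specialize this identity at two elements. Taking $g = (H_{L,M})_f$, the archimedean factor contributes $\beta_\infty(v_\infty^0)$ (a constant), and at each $p \nmid LM$ the local value $B_p(1) = 1$ drops out, leaving $\prod_{p\mid LM} B_p(h(l_p,m_p))$; combining this with the Fourier-coefficient identification $\Phi_F^{S_d}(t_c(H_{L,M})_f) = e^{-2\pi\,{\rm Tr}(S_d)}(LM)^{-k}a(\phi_{L,M}(c))$ from \eqref{fourierinveq3} gives the left-hand side of the theorem (up to explicit constants). Taking $g = 1$, the same factorization yields $\kappa_\Lambda\,\beta_\infty(v_\infty^0) = \mathcal{B}^\Lambda(1)$; but the periods $\Phi_F^{S_d}(t_c)$ depend only on the image of $c$ in $\Cl_d(1)$ (this is precisely the well-definedness of $c \mapsto [S_c]$), so
\begin{equation*}
 \mathcal{B}^\Lambda(1) = \frac{|\Cl_d(M)|}{|\Cl_d(1)|}\sum_{c \in \Cl_d(1)} \overline{\Lambda(c)}\,\Phi_F^{S_d}(t_c) = \frac{|\Cl_d(M)|}{|\Cl_d(1)|}\, e^{-2\pi\,{\rm Tr}(S_d)}\sum_{c\in\Cl_d(1)}\overline{\Lambda(c)}a(S_c),
\end{equation*}
which pins down the constant and, after dividing the first equation through by $|\Cl_d(M)|$, yields exactly the asserted formula.

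The main technical obstacle, to my mind, is not the abstract factorization but the careful bookkeeping needed to make it rigorous. One must verify that when $(H_{L,M})_f$ is conjugated across strong approximation as in \eqref{fourierinveq3}, the conjugated compact-open subgroup matches $K_p^{(0)}(M)$ on the $T_d$-side at exactly the right level so that $c \mapsto \Phi_F^{S_d}(t_c(H_{L,M})_f)$ is genuinely a function on $\Cl_d(M)$ (not on a larger or smaller quotient); this is the reason $M$ enters through the conjugation $\mat{M}{}{}{1}S_c\mat{M}{}{}{1}$ in the definition \eqref{philmdef}. A related subtlety is ensuring that the factorization of the global Bessel functional respects the chosen normalizations $B_p(1)=1$ at the good places, so that the constant $\kappa_\Lambda$ truly absorbs only the archimedean contribution and no stray local volumes. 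Once these local-global compatibilities are settled, the argument reduces to the two evaluations described above and Proposition~\ref{fourier-relation-prop}.
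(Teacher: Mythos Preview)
The paper does not supply its own proof of this theorem; it is quoted verbatim as Theorem~2.10 of \cite{KST} and used as an input. So there is nothing in the present paper to compare your argument against.

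That said, your outline is correct and is in fact the argument of \cite{KST}: one forms the global $(\Lambda,\theta_{S_d})$-Bessel period of $\Phi_F$, invokes local uniqueness of Bessel functionals to factorize it as a constant times $\prod_p B_p(g_p)$, and then evaluates at $g=1$ and $g=(H_{L,M})_f$. The evaluation at $(H_{L,M})_f$ uses exactly the computation recorded in this paper as \eqref{fourierinveq3}, and the evaluation at $g=1$ collapses to a sum over $\Cl_d(1)$ because $\Phi_F^{S_d}$ is right $K_0$-invariant, so $\Phi_F^{S_d}(t_c)$ depends only on the image of $c$ in $T_d(\A)/T_d(\Q)T_d(\R)\prod_p(T_d(\Q_p)\cap\GL_2(\Z_p))=\Cl_d(1)$. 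One small point worth making explicit: if the global functional happens to vanish, the factorization argument is unavailable, but then $\mathcal{B}^\Lambda(1)=0$ forces the right-hand side to vanish and $\mathcal{B}^\Lambda((H_{L,M})_f)=0$ forces the left-hand side to vanish, so the identity holds trivially. The level-$M$ compatibility you flag (that conjugation by $(H_{L,M})_f$ carries $\prod_p K_p^{(0)}(M)$ into $K_0$, so that $c\mapsto\Phi_F^{S_d}(t_c(H_{L,M})_f)$ descends to $\Cl_d(M)$) is indeed the one genuine check, and it goes through as you indicate.
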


Let us see what this implies in the setup of  Theorem~\ref{t:globalmain}. For $F$  a cuspidal Siegel Hecke eigenform of weight $k$ with respect to $\Symp_4(\Z)$ which is a Saito-Kurokawa lift, $D<0$ a discriminant and $L$ a positive integer, define $a(D;L)=a(S)$ where $S$ is any member of $\mathcal P_2$ satisfying $c(S)=L$ and $\disc(S)=DL^2$ (this is well-defined by the first assertion of Theorem~\ref{t:globalmain}, which we have already proven).  Then Theorem~\ref{t:relationkst}, in the special case $\Lambda=1$, $F$ as above, tells us that

\begin{equation}\label{penul}
a(dM^2;L) =
(LM)^{k} a(d;1)
\prod_{p\mid LM}B_{p}(h(l_p, m_p)).
\end{equation}
We need to prove
\begin{equation}\label{maass-rels-eqn2p}
  a(dM^2;L) = \sum\limits_{r | L} r^{k-1} a(d(LM/r)^2;1).
\end{equation}
By~\eqref{penul}, the left side is equal to
$$
 (LM)^{k} a(d;1)\prod_{p\mid LM}B_{p}(h(l_p, m_p)),
$$
and the right side is equal to
$$
 \sum\limits_{r | L} r^{k-1} (LM/r)^k a(d;1) \prod_{p\mid LM}B_{p}(h(0, l_p+ m_p - r_p)).
$$
Hence, we are reduced to showing that
$$
 \prod_{p\mid L}B_{p}(h(l_p, m_p))=\sum\limits_{r | L} r^{-1}\prod_{p\mid L}B_{p}(h(0, l_p+ m_p - r_p)).
$$
This equation would follow provided for each prime $p \mid LM$ we could prove that
$$
 B_{p}(h(l_p, m_p))=\sum_{i=0}^{l_p}p^{-i} B_{p}(h(0, l_p+ m_p -i)).
$$
But this follows from Theorem~\ref{localmaasstheorem}. Note here that, by our remarks in Sect.~\ref{specialautformsec}, the non-archimedean local components $\pi_p$ associated to $F$ are of the form $\chi1_{\GL(2)}\rtimes\chi^{-1}$ for an unramified character $\chi$ of $\Q_p^\times$ (type IIb). This concludes the proof of Theorem \ref{t:globalmain}.\qed
\section{Normalization of the Bessel functions}
We return to the setup of Section~\ref{specialautformsec}. We will prove certain explicit formulas for the Bessel functions and their effect under change of models. This will lead to another proof of the classical Maass relations which does not use Theorem~\ref{t:relationkst}. In the following let
$$
 S = \mat{a}{b/2}{b/2}{c}\in \mathcal{P}_2
$$
be fixed. Recall that in Sect.~\ref{specialautformsec} we have fixed distinguished vectors $v_p^0$ in each local representation $V_p$. For any place $p$ let $\beta_p$ be a non-zero functional $V_p\to\C$ as in Lemma \ref{localBessellemma}. Let $B_p^S$ be the \emph{Bessel function} corresponding to $v_p^0$, i.e., $B_p^S(g)=\beta_p(\pi_p(g)v_p^0)$ for $g\in G(\Q_p)$. We are going to normalize the $\beta_p$, hence the $B_p^S$, in a certain way.
\subsubsection*{Non-archimedean case}
Assume that $p$ is a prime. It follows from Sugano's formula \eqref{suganosformulaeq} that $B_p^S(1)\neq0$, provided $S$ satisfies the standard assumptions \eqref{standardassumptionseq}.
Hence, if $S$ satisfies these conditions, then we can normalize $B_p^S(1) = 1$.

For arbitrary positive definite $S$ in $\mathcal{P}_2$ we proceed as follows. Let $\disc(S) = N_1^2 d$, where $d$ is a fundamental discriminant. Let $S_d$ be as in~\eqref{defsd}. Then we have $S_d = a\,^t\!A S A$, where
\begin{equation}\label{A-defn}
 A := \begin{cases}
 \mat{-\frac b{2N_1a}}{\frac 1a}{\frac 1{N_1}}{} & \text{ if } d \equiv 0 \pmod{4},\\[4ex]
 \mat{-\frac b{2N_1a}+\frac 1{2a}\;}{\frac 1a}{\frac 1{N_1}}{} & \text{ if } d \equiv 1 \pmod{4}.
 \end{cases}
\end{equation}
For brevity, we put $S'=S_d$. We observe that the matrix $S'$ satisfies the standard assumptions \eqref{standardassumptionseq} for every prime $p < \infty$. Consequently, we can normalize our Bessel functions so that $B_p^{S'}(1) = 1$ for all primes $p$. However, for applications we require the values $B_p^S(1)=B_p^{S'}(\mat{A^{-1}}{}{}{a\,^t\!A})$. Calculating this value requires decomposing the argument of $B_p^{S'}$ in the form \eqref{doub-cos-eq} and then using Sugano's formula \eqref{suganosformulaeq}. We postpone the proof of the following lemma until the next section.

\begin{lemma}\label{finding-lm}
 Let $A,S,S', d, N_1$ be as above. Let $L = c(S)$.
   Then $\mat{A^{-1}}{}{}{a\,^t\!A}= t h(l,m) k$, with $t \in T_{S'}(\Q_p)$, $k \in \GSp_4(\Z_p)$ and
 $$
  m =v_p(N_1/L) \qquad \text{ and } \qquad  l = v_p(N_1) - m = v_p(L).
 $$
 Consequently, $B_p^S(1)=B_p^{S'}(h(l,m))$ with these values of $l$ and $m$. Since $S'$ satisfies the standard assumptions \eqref{standardassumptionseq}, the right hand side can be evaluated using Sugano's formula \eqref{suganosformulaeq}.
\end{lemma}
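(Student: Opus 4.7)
The plan is to reduce the $\GSp_4$ double-coset decomposition to the $\GL_2$ decomposition \eqref{doub-cos-gl2-eq} of $A^{-1}$, followed by a lift to the Siegel Levi. Inverting $A$ from \eqref{A-defn} directly yields
$$
A^{-1} = \mat{0}{N_1}{a}{b'}, \qquad \det A^{-1} = -aN_1,
$$
where $b' = b/2$ when $d \equiv 0 \pmod 4$ and $b' = (b-N_1)/2$ when $d \equiv 1 \pmod 4$. By \eqref{doub-cos-gl2-eq} there exist a unique $m \geq 0$ together with $t_0 \in T_{S'}(\Q_p)$ and $k_0 \in \GL_2(\Z_p)$ satisfying $A^{-1} = t_0 \mat{\varpi^m}{}{}{1} k_0$. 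I will first carry out the lift assuming this decomposition, and identify $m$ at the end.

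For the lift, observe that $\mat{A^{-1}}{}{}{a\,^t A} = \hat{A^{-1}} \cdot z_{-1/N_1}$, where $\hat{g} := \mat{g}{}{}{\det g\,(^t g)^{-1}}$ is the Siegel-Levi embedding and $z_\lambda := \mat{1_2}{}{}{\lambda\,1_2}$ a central element. Since $z_\lambda$ commutes with all Levi elements, substituting the $\GL_2$ decomposition gives
$$
\mat{A^{-1}}{}{}{a\,^t A} = \hat{t_0}\cdot D\cdot \hat{k_0}, \qquad D := \mathrm{diag}(\varpi^m,\,1,\,-N_1^{-1},\,-\varpi^m N_1^{-1}).
$$
Every scalar matrix $\alpha\cdot 1_4$ lies in $T_{S'}(\Q_p)$; taking $\alpha = -1/N_1$ and comparing the diagonal entries of $D$ with those of $h(l,m) = \mathrm{diag}(\varpi^{l+2m},\varpi^{l+m},1,\varpi^m)$ forces $l+m = v_p(N_1)$ and leaves a unit-diagonal $k_1 \in \GSp_4(\Z_p)$ such that $D = (-N_1^{-1}\cdot 1_4)\cdot h(l,m)\cdot k_1$. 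Combining,
$$
\mat{A^{-1}}{}{}{a\,^t A} = \bigl(\hat{t_0}\cdot(-N_1^{-1}\cdot 1_4)\bigr)\cdot h(l,m)\cdot(k_1\hat{k_0}),
$$
with the first factor in $T_{S'}(\Q_p)$ and the last in $\GSp_4(\Z_p)$, so the decomposition holds for any $m$, with $l = v_p(N_1) - m$ forced.

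It remains to identify $m$ from the $\GL_2$ decomposition. By the standard local theory (cf.\ the proof of Lemma 2-4 of \cite{Su}), $m$ equals the conductor of the $\Z_p$-order
$$
\mathfrak{o} := \{x \in L : x\cdot A^{-1}\Z_p^2 \subset A^{-1}\Z_p^2\},
$$
where $L$ acts on $\Q_p^2$ via its embedding into $M_2(\Q_p)$ defining $T_{S'}$. Using the basis $(0,a)^T,(N_1,b')^T$ of $A^{-1}\Z_p^2$ and the matrix realizations $\xi = \mat{0}{1}{d/4}{0}$ in case 1 (representing $\sqrt{d/4}$) or $\xi = \mat{0}{1}{(d-1)/4}{-1}$ in case 2 (representing $(-1+\sqrt d)/2$) of a generator of $\mathcal{O}_L$, one expands the condition ``$t = x + y\xi \in \mathfrak{o}$'' into linear conditions on $(x,y)$; the identity $b^2-4ac = N_1^2 d$ then eliminates all cross-terms and reduces the system to the divisibilities $N_1 \mid ya$, $N_1 \mid yb'$, $N_1 \mid yc$, together with a compatibility shift of $x$ by a rational multiple of $y$. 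This identifies $\mathfrak{o}$ as the order of conductor $v_p(N_1/G)$, where $G := \gcd_p(a,b',c)$.

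The final step is the arithmetic identity $v_p(G) = v_p(L)$, which then yields $m = v_p(N_1/L)$ and $l = v_p(L)$. For odd $p$ the identity is immediate from $v_p(b') = v_p(b)$. For $p = 2$ the naive identity can fail (for instance, in case 1 when $v_2(b') = v_2(b)-1$ is strictly smaller than both $v_2(a)$ and $v_2(c)$), but the fundamentality of $d$, combined with the relation $b^2 - 4ac = N_1^2 d$, forces congruence constraints mod $8$ on $b'^2 - ac = N_1^2(d/4)$ that exclude every such configuration. This case analysis at the prime $2$ is the principal technical hurdle.
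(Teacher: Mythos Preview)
Your lift from the $\GL_2$ decomposition to $\GSp_4$ is correct and matches the paper's Lemma~\ref{whichlm}; the identification $l+m=v_p(N_1)$ is forced exactly as you say. The problem is in your computation of $m$ via the order $\mathfrak{o}$.

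If you actually carry out the matrix computation of $A\xi A^{-1}$ (in either case), using $b^2-4ac=N_1^2d$ to simplify the $(1,2)$-entry, you get
\[
A\xi A^{-1}=\mat{*}{-c/N_1}{a/N_1}{*},
\]
where the diagonal entries are $-b/(2N_1),\,b/(2N_1)$ in case~1 and $-(N_1+b)/(2N_1),\,(b-N_1)/(2N_1)$ in case~2. The off-diagonal entries give the conditions $N_1\mid ya$ and $N_1\mid yc$; the two diagonal entries each involve $x$, and their \emph{difference} yields the third condition on $y$ alone, namely $N_1\mid yb$ --- not $N_1\mid yb'$. So the correct content is $G=\gcd_p(a,b,c)=L$, and the claimed identity $v_p(G)=v_p(L)$ is then a tautology. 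Your version with $b'$ is wrong: take $d=-35$, $p=3$, and $S=\mat{9}{3/2}{3/2}{9}$. Then $N_1=3$, $L=3$, but $b'=(3-3)/2=0$, so $\gcd_3(a,b',c)=9$ and $v_3(\gcd(a,b',c))=2\neq 1=v_3(L)$. In particular your claim ``for odd $p$ the identity is immediate from $v_p(b')=v_p(b)$'' fails already here, and the advertised $p=2$ case analysis is chasing an error rather than a genuine obstruction.

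For comparison, the paper avoids orders altogether. Its Lemma~\ref{toricdecompositionlemma} shows that $m=\max(0,-v_p(a'),-v_p(d'))$ where $a',d'$ are the diagonal entries of $\det(h)^{-1}\,{}^thS'h$. For $h=A^{-1}$ this matrix is simply $-S/N_1$, so $m=\max(0,v_p(N_1/a),v_p(N_1/c))$, and the single arithmetic identity $\gcd(a,b,c)=\gcd(a,c,N_1)$ (an easy consequence of $b^2-4ac=N_1^2d$ with $d$ fundamental) finishes the proof in one line. This is both shorter and sidesteps the lattice bookkeeping entirely.
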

\subsubsection*{Archimedean case}
If $p = \infty$, let $A' = \mat{-b/(N_1a)}{1/a}{2/N_1}{0} \mat{-1/\sqrt{-d}}{}{}{1}$, where as before we write $\disc(S) = N_1^2 d$, with $d$ a fundamental discriminant. Then $S' = a\,^t\!A' S A' = 1_2$, the identity matrix. We normalize so that $B_\infty^{1_2}(1) = e^{-4\pi}$; this is possible by Theorem 3.10 of \cite{PS1}.
What naturally appears when we relate Bessel models to Fourier coefficients is the value $B_\infty^{S}(1)=B_\infty^{1_2}(\mat{A'^{-1}}{}{}{a\,^t\!A'})$. Calculating this value requires decomposing the argument of $B_\infty^{1_2}$ as $thk$, where $t\in T_S(\R)$, the matrix $h$ is diagonal, and $k$ is in the standard maximal compact subgroup of $\SSp_4(\R)$. Then one may use the explicit formula given in Theorem 3.10 of \cite{PS1}. The result is as follows.
\begin{lemma}\label{findinglambdazeta}
 With the above notations and normalizations, we have
 \begin{equation}\label{B-infty-formula}
  B_\infty^{S}(1)= \det(S)^{k/2}\,e^{-2 \pi\,{\rm Tr}(S)}.
 \end{equation}
\end{lemma}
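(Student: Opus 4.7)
The plan is to reduce the computation to evaluating the standard archimedean Bessel function $B_\infty^{1_2}$ at the single element $g_0 := \mat{A'^{-1}}{}{}{a\,^t\!A'}$, and then to invoke the explicit formula supplied by Theorem~3.10 of \cite{PS1}.

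\textbf{Step 1 (change of model).} I would first establish the archimedean analogue of \eqref{change-of-T}, namely
\[
 B_\infty^{S}(g) = B_\infty^{1_2}(g_0\,g) \qquad \text{for all } g \in G(\R)^+.
\]
Writing $g_0 \in \GSp_4(\R)$ explicitly, a direct conjugation computation (exactly as in the proof of Proposition \ref{fourier-relation-prop}) shows that
$g_0\, \mat{1}{X}{}{1}\, g_0^{-1} = \mat{1}{\frac1a A'^{-1}X\,^t\!A'^{-1}}{}{1}$, so $\Theta_{1_2}$ pulls back under conjugation by $g_0$ to $\Theta_{a\,^t\!A'SA'} = \Theta_S$. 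Consequently, if $\beta_\infty$ is the fixed $(\Theta_{1_2},\Lambda=1)$-Bessel functional on $V_\infty$, then $v \mapsto \beta_\infty(\pi_\infty(g_0)v)$ is a $(\Theta_S,1)$-Bessel functional; by the uniqueness asserted in Lemma~\ref{localBessellemma} and the fact that both functionals evaluate to the same $T_S$-invariant value on $v_\infty^0$ (after rescaling), this must be precisely the functional underlying $B_\infty^S$. Specializing to $g = 1$ gives $B_\infty^S(1) = B_\infty^{1_2}(g_0)$.

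\textbf{Step 2 (explicit formula at a diagonal element).} Note that $g_0$ is already in $M(\R)^+$ with similitude $\mu(g_0) = a > 0$; no nontrivial Iwasawa decomposition is needed. Writing $g_0 = \mat{\tilde A}{}{}{\tilde\mu\,^t\!\tilde A^{-1}}$ with $\tilde A = A'^{-1}$ and $\tilde\mu = a$, I would then invoke the explicit formula of Theorem~3.10 of \cite{PS1}, which, in the normalization $B_\infty^{1_2}(1)=e^{-4\pi}$, takes the form
\[
 B_\infty^{1_2}\bigl(\mat{\tilde A}{}{}{\tilde\mu\,^t\!\tilde A^{-1}}\bigr) = \det(\tilde A)^{k}\,\tilde\mu^{-k}\,\exp\!\Bigl(-\tfrac{2\pi}{\tilde\mu}\,\mathrm{Tr}(\tilde A\,^t\!\tilde A)\Bigr).
\]
(This can also be read off directly from the Fourier-coefficient description of Proposition~\ref{fourier-relation-prop} applied to the vector $v_\infty^0$, which is how the formula is derived in \cite{PS1}.)

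\textbf{Step 3 (simplification).} The identity $a\,^t\!A'SA' = 1_2$ gives ${}^t\!A'^{-1}A'^{-1} = aS$. Taking traces yields $\mathrm{Tr}(A'^{-1}\,^t\!A'^{-1}) = a\,\mathrm{Tr}(S)$, so the exponential in Step~2 collapses to $e^{-2\pi\,\mathrm{Tr}(S)}$. Taking determinants yields $\det(A'^{-1})^{2} = a^{2}\det(S)$, and a direct check from the definition \eqref{A-defn} — using $\disc(S)=N_1^2 d = -4\det(S)$ and $d<0$ — shows $\det(A') = 2/(aN_1\sqrt{-d}) > 0$, so that $\det(A'^{-1}) = a\sqrt{\det(S)}$. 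Therefore $\det(\tilde A)^{k}\tilde\mu^{-k} = a^{k}\det(S)^{k/2}\cdot a^{-k} = \det(S)^{k/2}$, and combining with the exponential gives the claimed formula $B_\infty^S(1)=\det(S)^{k/2}e^{-2\pi\,\mathrm{Tr}(S)}$.

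\textbf{Main obstacle.} The genuine content is Step~1: making the change-of-model identity precise with the correct proportionality constant. One must verify that both sides of $B_\infty^S(g)=B_\infty^{1_2}(g_0 g)$ share the same $K_\infty$-equivariance (since $v_\infty^0$ spans the scalar minimal $K_\infty$-type $\det^{k}$, not the trivial representation) and that no stray $j$-factor intervenes. Steps~2 and~3 are then bookkeeping, the only subtlety being the positivity of $\det(A')$, which is decided by inspection of the explicit expression for $A'$.
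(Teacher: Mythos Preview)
Your argument is correct and follows the same overall strategy as the paper: reduce $B_\infty^S(1)$ to $B_\infty^{1_2}(g_0)$ via the change-of-model identity, then evaluate using the explicit archimedean formula from \cite{PS1}. The difference lies in the execution of Step~2. The paper carries out the explicit Cartan decomposition $A'^{-1}=u\,k_1\,\mathrm{diag}(\zeta,\zeta^{-1})\,k_2$ via the auxiliary Lemma~\ref{arch-toric-decomp}, solving a quadratic for $\zeta^2$, and only then inserts into \eqref{B-infty-known-formula}. You bypass this by quoting the value at a general Levi element in the closed form $\det(\tilde A)^k\tilde\mu^{-k}\exp(-\tfrac{2\pi}{\tilde\mu}\mathrm{Tr}(\tilde A\,^t\!\tilde A))$, which then collapses immediately under the defining relation $^t\!A'^{-1}A'^{-1}=aS$. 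This is cleaner: it replaces the explicit solve for $\zeta$ by the invariant identities $\lambda=\det(\tilde A)/\tilde\mu$ and $\lambda(\zeta^2+\zeta^{-2})=\mathrm{Tr}(\tilde A\,^t\!\tilde A)/\tilde\mu$, both of which are one-line consequences of the Cartan decomposition.

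One small caveat: the formula you cite in Step~2 is not literally what Theorem~3.10 of \cite{PS1} provides, at least as the paper quotes it in \eqref{B-infty-known-formula}, which is stated only at the elements $h(\lambda,\zeta)$. The general Levi formula follows from \eqref{B-infty-known-formula} by exactly the identities just mentioned (together with the triviality of $\Lambda$ on $T_{1_2}(\R)$ and of the central character), so there is no gap---but you should supply that one-line derivation rather than attribute the formula directly to \cite{PS1}. Your parenthetical appeal to Proposition~\ref{fourier-relation-prop} is misplaced, since that result concerns the global $\Phi_F^S$ rather than the purely local $B_\infty^{1_2}$.
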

We postpone the proof of this lemma until the next section.
\subsection*{Global normalization}
Recall that our starting point was a Saito-Kurokawa lift $F\in S_k^{(2)}(\Gamma)$ and its associated adelic function $\Phi_F$ defined in \eqref{spin strong approx}. Having fixed the local vectors $v^0_p$ at each place, we may normalize the isomorphism $V\cong\otimes V_p$ such that $\Phi_F$ corresponds to the pure tensor $\otimes v^0_p$.

Given $S\in\mathcal{P}_2$, let $C_S$ be the constant defined by \eqref{extra-inv-lemeq1}. Having fixed the vectors $v_p^0$, the functionals $\beta_p$, and the isomorphism $V\cong\otimes V_p$, the constants $C_S$ are well-defined. By definition,
\begin{equation}\label{local-global-reln}
 \Phi_F^{S}(g) = C_{S} \prod\limits_{p \leq \infty} B_p^{S}(g_p),
\end{equation}
for all $g=(g_p)\in G(\A)$. The values of the constants $C_S$ are unknown, but the following property will be sufficient to derive the Maass relations.
\begin{lemma}\label{const-lem}
 Let $S, S' \in \mathcal P_2$ such that $\Phi_F^{S}(1), \Phi_F^{S'}(1) \neq 0$ and $S' = v\,^t\!A SA$ for some $A \in \GL_2(\Q)$ and $v \in \Q^\times$. Then $C_{S} = C_{S'}$.
\end{lemma}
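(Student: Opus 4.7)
The plan is to combine~\eqref{change-of-T} with a compatibility of the local Bessel normalizations. Setting $g=1$ in~\eqref{change-of-T} gives $\Phi_F^{S'}(1)=\Phi_F^{S}(m_0)$, where $m_0=\mat{A}{}{}{v^{-1}\,^tA^{-1}}\in M(\Q)$ is embedded diagonally in $G(\A)$. Expanding both sides through~\eqref{local-global-reln} yields
\[
 C_{S'}\prod_{p\le\infty}B_p^{S'}(1)=C_S\prod_{p\le\infty}B_p^{S}(m_0),
\]
and the hypotheses $\Phi_F^{S}(1),\Phi_F^{S'}(1)\neq 0$ force both global products to be nonzero. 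Thus the lemma reduces to the \emph{local} identity $B_p^{S'}(1)=B_p^{S}(m_0)$ at every place $p$.

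To establish this I would unwind the normalizations from the beginning of Section~6. At each place, the functional $\beta_p^{S}$ is defined as a pullback $\beta_p^{S}=\beta_p^{S_{\mathrm{ref}}}\circ\pi_p(m_{A_S})$, where $S_{\mathrm{ref}}$ is the reference matrix ($S_d$ at the finite places, $1_2$ at infinity) and $m_{A_S}=\mat{A_S^{-1}}{}{}{a_S\,^tA_S}$ is the Levi element attached to any pair $(A_S,a_S)$ with $S_{\mathrm{ref}}=a_S\,^tA_S S A_S$; property~ii) of Lemma~\ref{localBessellemma} ensures that $\beta_p^{S}$ is independent of the choice of $(A_S,a_S)$. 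Since $\disc(S)/\disc(S')=v^{2}\det(A)^{2}\in\Q^{\times2}$, the same $S_{\mathrm{ref}}$ serves for both $S$ and $S'$. Starting from any $(A_{S'},a_{S'})$ with $S_{\mathrm{ref}}=a_{S'}\,^tA_{S'}S'A_{S'}$, the relation $S'=v\,^tASA$ allows one to take $A_S=AA_{S'}$ and $a_S=va_{S'}$. A short matrix multiplication in $M$ then shows $m_{A_S}\,m_0=m_{A_{S'}}$, whence
\[
 \beta_p^{S'}(v_p^0)=\beta_p^{S_{\mathrm{ref}}}(\pi_p(m_{A_{S'}})v_p^0)=\beta_p^{S_{\mathrm{ref}}}(\pi_p(m_{A_S})\pi_p(m_0)v_p^0)=\beta_p^{S}(\pi_p(m_0)v_p^0),
\]
which is precisely $B_p^{S'}(1)=B_p^{S}(m_0)$. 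Combined with the first paragraph this delivers $C_S=C_{S'}$.

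The only point of friction is at the archimedean place, where the transfer matrix $A'$ of Section~6 is non-rational and the reference functional carries its own archimedean normalization via Theorem~3.10 of~\cite{PS1} (with its factor $e^{-4\pi}$). However the identity $m_{A_S}m_0=m_{A_{S'}}$ is a formal computation in the Levi $M$ that does not use rationality of $A_S$ or $A_{S'}$, and $m_0$ is the same rational $M$-element at every place, so the computation above goes through verbatim at $p=\infty$. Notably, neither Sugano's formula nor the explicit formulas of Lemmas~\ref{finding-lm}--\ref{findinglambdazeta} enter the argument; all one uses is that $\beta_p^{S}$ and $\beta_p^{S'}$ are pullbacks of the \emph{same} reference functional along $M$-elements that differ by right-multiplication by $m_0$.
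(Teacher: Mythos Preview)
Your argument is correct and follows the same route as the paper's own proof: apply \eqref{change-of-T} at $g=1$, expand both sides via \eqref{local-global-reln}, and cancel the nonzero product $\prod_p B_p^{S'}(1)$. The paper records the key local identity $B_p^{S}(m_0)=B_p^{S'}(1)$ in a single unjustified equality, whereas you have carefully unwound the pullback definitions of $\beta_p^{S}$ and $\beta_p^{S'}$ from the common reference functional (using Lemma~\ref{localBessellemma}~ii) for well-definedness) to verify $m_{A_S}m_0=m_{A_{S'}}$; this is a welcome elaboration, but not a different method.
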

\begin{proof}
We have
$$
 \Phi_F^{S'}(1) = \Phi_F^{S}(\mat{A}{}{}{v^{-1}\,^t\!A^{-1}}) = C_{S} \prod\limits_{p \leq \infty} B_p^{S}(\mat{A}{}{}{v^{-1}\,^t\!A^{-1}}) = C_{S} \prod\limits_{p \leq \infty} B_p^{S'}(1).
$$
On the other hand, the left hand side equals $C_{S'} \prod\limits_{p \leq \infty} B_p^{S'}(1)$. The assertion follows.
\end{proof}


%
%
%

\section{Explicit formulas for Bessel functions}

In this section we provide the proofs of Lemmas \ref{finding-lm} and \ref{findinglambdazeta}.

%
%
\subsection*{Non-archimedean case}
We first consider $p < \infty$.  Recall that $S'$ satisfies the standard assumptions \eqref{standardassumptionseq}.
%
In view of \eqref{doub-cos-eq}, given an element $g \in \GSp_4(\Q_p)$ of the form $g = \mat{h}{}{}{v\,^th^{-1}}$, we want to know which double coset $R(\Q_p) h(l,m) \GSp_4(\Z_p)$ it belongs to. For this, we first state the following result for $\GL_2$.

\begin{lemma}\label{toricdecompositionlemma}
 Suppose that $S' = \mat{{\bf a}}{{\bf b}/2}{{\bf b}/2}{{\bf c}}$ satisfies the standard assumptions \eqref{standardassumptionseq}. Let $h\in\GL_2(\Q_p)$, and assume that, according to the decomposition \eqref{doub-cos-gl2-eq},
 \begin{equation}\label{toricdecompositionlemmaeq1}
  h=t\mat{p^m}{}{}{1}k
 \end{equation}
 with $t\in T_{S'}(\Q_p)$, a non-negative integer $m$, and $k\in\GL_2(\Z_p)$. Define $a',b',c',d'\in \Q_p$ by
 $$
  \mat{a'}{b'}{c'}{d'}=\det(h)^{-1}\,^thS'h.
 $$
 Then
 $$
  m = {\rm max}(0, -v(a'),-v(d')).
 $$
\end{lemma}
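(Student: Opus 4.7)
The plan is to strip off the torus element $t$ using its defining property, reducing the problem to a matrix identity over $\GL_2(\Z_p)$, after which the valuations of $a'$ and $d'$ can be read off directly from the entries of $k$.

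Since $t\in T_{S'}(\Q_p)$ satisfies ${}^ttS't=\det(t)S'$, substituting $h=t\mat{p^m}{}{}{1}k$ into ${}^thS'h$ and dividing by $\det(h)=\det(t)p^m\det(k)$, and absorbing one factor $p^{-m}$ into the middle matrix, yields
$$
\mat{a'}{b'}{c'}{d'}\;=\;\det(k)^{-1}\,{}^tk\mat{\mathbf{a}p^m}{\mathbf{b}/2}{\mathbf{b}/2}{\mathbf{c}p^{-m}}k.
$$
Writing $k=\mat{\alpha}{\beta}{\gamma}{\delta}$ and expanding, the diagonal entries come out to
$$
a'=\det(k)^{-1}\bigl(\mathbf{a}p^m\alpha^2+\mathbf{b}\alpha\gamma+\mathbf{c}p^{-m}\gamma^2\bigr),\quad d'=\det(k)^{-1}\bigl(\mathbf{a}p^m\beta^2+\mathbf{b}\beta\delta+\mathbf{c}p^{-m}\delta^2\bigr),
$$
where the $\tfrac12$ from the off-diagonal of $S'$ cancels against the $2$ from the symmetric cross-terms, so no issue arises at $p=2$.

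The desired equality $m=\max(0,-v(a'),-v(d'))$ now follows from a short valuation analysis using the standard assumptions ($\mathbf{a},\mathbf{b}\in\mathfrak{o}$, $\mathbf{c}\in\mathfrak{o}^\times$) together with $\det(k)\in\mathfrak{o}^\times$. For the upper bound, each of the three summands in $a'$ has valuation at least $-m$ (the first is $\geq m$, the middle is $\geq 0$, the last is $\geq -m+2v(\gamma)\geq -m$), and similarly for $d'$, yielding $\max(0,-v(a'),-v(d'))\leq m$. For the matching lower bound when $m>0$, the crucial observation is that $\det(k)\in\mathfrak{o}^\times$ forces at least one of $\gamma,\delta$ to be a unit (otherwise the whole bottom row of $k$ lies in $\mathfrak{p}$, putting $\det k$ into $\mathfrak{p}$). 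If $\gamma\in\mathfrak{o}^\times$, the last summand of $a'$ has valuation exactly $-m$ while the other two have valuation $\geq m>-m$, so $v(a')=-m$; the case $\delta\in\mathfrak{o}^\times$ handles $d'$ analogously. If $m=0$, both $a'$ and $d'$ already lie in $\mathfrak{o}$, and the maximum is $0$.

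The argument is essentially bookkeeping; the only mildly delicate point is ensuring exhaustiveness of the unit/non-unit case analysis on the bottom row of $k$, which is supplied by $\det(k)\in\mathfrak{o}^\times$. The toric reduction, the cancellation of halves, and the dominant-term valuation analysis are all routine.
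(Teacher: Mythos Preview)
Your proof is correct and follows essentially the same approach as the paper's: reduce via the torus relation ${}^ttS't=\det(t)S'$ to an identity over $\GL_2(\Z_p)$, expand the diagonal entries of $\det(k)^{-1}\,{}^tkS''k$, and use that $\mathbf{c}$ and at least one entry of the bottom row of $k$ are units to pin down the minimal valuation. One small slip that does not affect the argument: in the lower-bound step the middle summand $\mathbf{b}\alpha\gamma$ has valuation $\geq 0$, not $\geq m$, but since $0>-m$ for $m>0$ the dominant-term conclusion $v(a')=-m$ still holds.
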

\begin{proof}
Taking determinants on the identity (\ref{toricdecompositionlemmaeq1}), we get
\begin{equation}\label{toricdecompositionlemmaeq2}
 \det(h)=\det(t)p^m\det(k).
\end{equation}
A calculation shows that
$$
 \det(h)^{-1}\,^thS'h = \det(k)^{-1}\,^tk\mat{\mathbf{a}p^m}{\mathbf{b}/2}{\mathbf{b}/2}{\mathbf{c}p^{-m}}k.
$$
If we let $k=\mat{w}{x}{y}{z}$, it follows that
$$
 a'=\det(k)^{-1}\big(w^2\mathbf{a}p^m+wy\mathbf{b}+y^2\mathbf{c}p^{-m}\big), \qquad
 d'=\det(k)^{-1}\big(x^2\mathbf{a}p^m+xz\mathbf{b}+z^2\mathbf{c}p^{-m}\big).
$$
Note that all quantities except (possibly) $p^{-m}$ are integral, and that $\mathbf{c}$ is a unit. Also, one of $y$ or $z$ is a unit. Hence,
$$
 a',d'\in\OF\quad\Longleftrightarrow\quad m=0.
$$
Assume $a'$ and $d'$ are not both in $\OF$. Then $m>0$, the valuation of $a'$ and $d'$ is $\geq-m$, and at least one of $a'$ or $d'$ has valuation $-m$. It follows that $-m={\rm min}(v(a'),v(d'))$, or equivalently $m={\rm max}(-v(a'),-v(d'))$. This concludes the proof.
\end{proof}

Using this lemma, it is straightforward to derive the following result for $\GSp_4$.
\begin{lemma}\label{whichlm}
 Let $g = \mat{h}{}{}{v\,^th^{-1}} \in \GSp_4(\Q_p)$ with $h \in \GL_2(\Q_p)$ and $v \in \Q_p^\times$. Then $g \in R_{S'}(\Q_p) h(l,m) \GSp_4(\Z_p)$, where $m$ is the integer obtained from Lemma \ref{toricdecompositionlemma} for $h$, and the value of $l$ is given by $l = v_p(\det(h)/v)-m$.
\end{lemma}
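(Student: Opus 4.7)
The plan is to reduce $g = \mat{h}{}{}{v\,^th^{-1}}$ to the standard double-coset form by using the $\GL_2$-decomposition of $h$ supplied by Lemma~\ref{toricdecompositionlemma} together with the embedding $\iota\colon\GL_2\hookrightarrow\GSp_4$, $\iota(x) = \mat{x}{}{}{\det(x)\,^tx^{-1}}$. The only non-routine ingredient is keeping track of the discrepancy between the similitude $v$ of $g$ and the determinant $\det(h)$ of its $\GL_2$-part.

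First, apply Lemma~\ref{toricdecompositionlemma} to write $h = t_0\mat{p^m}{}{}{1}k_0$ with $t_0\in T_{S'}(\Q_p)$, $k_0\in\GL_2(\Z_p)$, and $m$ as in the statement. Under $\iota$ we have $\iota(t_0)\in T_{S'}(\Q_p)\subset R(\Q_p)$ and $\iota(k_0)\in\GSp_4(\Z_p)$. A one-line block computation gives
$$
 g = \iota(h)\cdot z_\beta,\qquad z_\beta := {\rm diag}(1,1,\beta,\beta),\quad \beta = v/\det(h),
$$
so the similitude discrepancy is captured by a single element $z_\beta$ of the Siegel Levi $M$. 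Since $z_\beta$ is central in $M$ (both commute, e.g., with $\iota(k_0)$), and since $\iota$ is a group homomorphism, we get
$$
 g = \iota(t_0)\cdot\iota(\mat{p^m}{}{}{1})\,z_\beta\cdot\iota(k_0).
$$

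Now compute $\iota(\mat{p^m}{}{}{1})\,z_\beta = {\rm diag}(p^m,1,\beta,p^m\beta)$. Writing $\beta = p^s u$ with $u\in\Z_p^\times$ and $s = v_p(\beta) = v_p(v)-v_p(\det(h)) = -v_p(\det(h)/v)$, one verifies directly
$$
 {\rm diag}(p^m,1,\beta,p^m\beta) = (p^s 1_4)\cdot h(l,m)\cdot{\rm diag}(1,1,u,u),
$$
and matching diagonal entries forces $l+m=-s$, i.e.\ $l = -m-s = v_p(\det(h)/v)-m$. The scalar matrix $p^s 1_4$ equals $\iota(p^s 1_2)$, and any scalar $\lambda 1_2$ lies in $T_{S'}(\Q_p)$ because $\,^t(\lambda 1_2)S'(\lambda 1_2) = \det(\lambda 1_2)S'$; meanwhile ${\rm diag}(1,1,u,u)\cdot\iota(k_0)\in\GSp_4(\Z_p)$.

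Collecting everything,
$$
 g = \bigl(\iota(t_0)\,\iota(p^s 1_2)\bigr)\cdot h(l,m)\cdot\bigl({\rm diag}(1,1,u,u)\,\iota(k_0)\bigr),
$$
which exhibits $g\in R(\Q_p)\,h(l,m)\,\GSp_4(\Z_p)$ with the asserted values of $l$ and $m$. The main thing to be careful about is the sign in the formula for $l$: it comes out correctly only after recognising that factoring off the scalar $p^s 1_4$ shifts every diagonal entry by $s$, in particular turning the second entry $1$ into $p^{-s}$, which forces $l+m=-s$ and hence $l = v_p(\det(h)/v)-m$.
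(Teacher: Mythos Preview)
Your argument is correct and is exactly the kind of verification the paper has in mind: the paper gives no proof beyond the sentence ``Using this lemma, it is straightforward to derive the following result for $\GSp_4$,'' and your computation---embedding the $\GL_2$-decomposition of Lemma~\ref{toricdecompositionlemma} via $\iota$, absorbing the similitude discrepancy $\beta=v/\det(h)$ into a scalar $p^s1_4\in T_{S'}(\Q_p)$ and an integral diagonal factor---is the natural way to carry this out. The bookkeeping (in particular the sign of $s$ and the resulting identity $l=v_p(\det(h)/v)-m$) is handled correctly.
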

We can now give the \emph{proof of Lemma \ref{finding-lm}}: From Lemmas \ref{toricdecompositionlemma} and \ref{whichlm}, since
$$
 \det(A^{-1})^{-1} \,^t\!A^{-1} S' A^{-1} = aS\det(A)
$$
and $\det(A) = -1/ (N_1a)$, we get $m = {\rm max}(0, v_p\big(\frac{N_1}{a}\big), v_p\big(\frac{N_1}{ c}\big))$. Making use of ${\rm gcd}(a,b,c) = {\rm gcd}(a,c,N_1)$, the result follows.\qed
\subsection*{Archimedean case}
Now consider $p = \infty$. The values of $B_\infty^{S'}$ have been computed in \cite{PS1}. In this case $S' = 1_2$, and hence $T_{S'}(\R) = \{\mat{\gamma}{}{}{\gamma} : \gamma > 0\} \SO(2)$.  We have the following disjoint double coset decomposition,
$$
 \GSp_4(\R) = R(\R) \{h(\lambda, \zeta) : \lambda \in \R^\times,\:\zeta \geq 1 \} K^1,
$$
where $K^1$ is the maximal compact subgroup of $\SSp_4(\R)$, $R(\R) = T_{1_2}(\R) U(\R)$ is the Bessel subgroup, and
$$
 h(\lambda, \zeta) = \mat{\lambda \mat{\zeta}{}{}{\zeta^{-1}}}{}{}{\mat{\zeta^{-1}}{}{}{\zeta}}.
$$
If $g = uth(\lambda, \zeta)k_0$, with $u \in U(\R)$, $t \in T_{1_2}(\R)$ and $k_0 \in K^1$, then, by Theorem 3.10 of \cite{PS1},
\begin{equation}\label{B-infty-known-formula}
 B_\infty^{1_2}(g) = \Theta_{1_2}(u) B_\infty^{1_2}(h(\lambda, \zeta)) = \Theta_{1_2}(u)  \lambda^k e^{-2 \pi \lambda (\zeta^2+\zeta^{-2})}.
\end{equation}
Now we need to obtain the decomposition of $g_{A'} = \mat{A'^{-1}}{}{}{a\,^t\!A'}$ as $uth(\lambda, \zeta)k_0$. Clearly, $u=1$. Also,
$$
 A'^{-1} = \mat{}{-\frac{\sqrt{-d}N_1}2}{a}{\frac b2} = \mat{}{-1}{1}{} \mat{a}{\frac b2}{}{\frac{\sqrt{-d}N_1}2} = u \mat{}{-1}{1}{}  \mat{y}{x}{}{y^{-1}},
$$
with
$$
 u = \sqrt{a\frac{\sqrt{-d}N_1}2}, \qquad x = \frac{b/2}u, \qquad \text{ and } \qquad y = \frac au.
$$
\begin{lemma}\label{arch-toric-decomp}
 Let $h = \mat{y}{x}{}{y^{-1}}$ with $y \neq 0$. Then $h = k_1 \mat{\zeta}{}{}{\zeta^{-1}} k_2$, with $k_1, k_2 \in \SO(2)$ and
 $$
  \zeta^2 = \frac{1+x^2y^2+y^4 + \sqrt{(1+x^2y^2+y^4)^2-4y^4}}{2y^2}.
 $$
\end{lemma}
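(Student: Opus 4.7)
The plan is to recognize this as a singular value decomposition statement for $h$ over $\SO(2)$. Since $\det(h)=1$, we seek to write $h=k_1\,\mathrm{diag}(\zeta,\zeta^{-1})\,k_2$ with $k_1,k_2\in\SO(2)$ and $\zeta\geq 1$, which is exactly an SVD whose singular values $\zeta,\zeta^{-1}$ multiply to $1$.

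First I would compute
\[
 h^T h=\begin{pmatrix}y&0\\x&y^{-1}\end{pmatrix}\begin{pmatrix}y&x\\0&y^{-1}\end{pmatrix}=\begin{pmatrix}y^2&xy\\xy&x^2+y^{-2}\end{pmatrix},
\]
which has trace $y^2+x^2+y^{-2}$ and determinant $1$ (consistent with $\det(h)^2=1$). Hence its eigenvalues are $\zeta^2$ and $\zeta^{-2}$, where $\zeta^2$ is the larger root of $t^2-(y^2+x^2+y^{-2})t+1=0$. Solving the quadratic and multiplying numerator and denominator by $y^2$ gives exactly the claimed formula
\[
 \zeta^2=\frac{1+x^2y^2+y^4+\sqrt{(1+x^2y^2+y^4)^2-4y^4}}{2y^2}.
\]

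Next I would produce the orthogonal factors. By the spectral theorem for the symmetric matrix $h^T h$, there exists $k_2\in\mathrm{O}(2)$ such that $k_2^T(h^Th)k_2=\mathrm{diag}(\zeta^2,\zeta^{-2})$. Setting $D=\mathrm{diag}(\zeta,\zeta^{-1})$, the matrix $k_1:=h k_2^T D^{-1}$ then satisfies $k_1^Tk_1=D^{-1}k_2h^Thk_2^TD^{-1}=1_2$, so $k_1\in\mathrm{O}(2)$ and $h=k_1Dk_2$.

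The only subtle point — and the main thing to be careful about — is ensuring $k_1,k_2\in\SO(2)$ rather than merely $\mathrm{O}(2)$. Since $\det(h)=1=\det(D)$, we have $\det(k_1)\det(k_2)=1$, so the two determinants agree in sign. If both equal $-1$, conjugating by $J=\mathrm{diag}(1,-1)$ (which commutes with $D$) via $h=(k_1J)\,D\,(Jk_2)$ replaces them by elements of $\SO(2)$, since left/right multiplication by $J$ flips the determinant of an orthogonal matrix while $JDJ=D$. This completes the decomposition and hence the lemma.
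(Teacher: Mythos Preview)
Your argument is correct in substance and takes a genuinely different route from the paper's. The paper first invokes the Cartan decomposition of $\SL_2(\R)$ to assert the existence of $k_1,k_2\in\SO(2)$ and $\zeta>1$ with $h=k_1\,\mathrm{diag}(\zeta,\zeta^{-1})\,k_2$, then applies both sides to $i$ as M\"obius transformations (using that $\SO(2)$ stabilizes $i$) to obtain two real equations in $\theta$ and $\zeta$, which after elimination yield the quadratic $y^2\zeta^4-(1+x^2y^2+y^4)\zeta^2+y^2=0$. Your approach instead reads the statement as a singular value decomposition: the eigenvalues of $h^Th$ are $\zeta^{\pm2}$, and the spectral theorem supplies the orthogonal factors. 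Your method is more self-contained (it proves existence rather than citing Cartan) and purely linear-algebraic; the paper's method stays closer to the upper half plane picture that is being used in the surrounding argument.

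One small slip to fix: your spectral theorem statement $k_2^T(h^Th)k_2=\mathrm{diag}(\zeta^2,\zeta^{-2})$ and your verification $k_1^Tk_1=D^{-1}k_2(h^Th)k_2^TD^{-1}$ are inconsistent by a transpose. With $k_1:=hk_2^TD^{-1}$ and $h=k_1Dk_2$, what you need is $k_2(h^Th)k_2^T=D^2$, not $k_2^T(h^Th)k_2=D^2$. Either swap the transposes in the spectral decomposition, or rename and write $h=k_1Dk_2^T$; the rest of the argument (including the $\SO(2)$ correction via $J=\mathrm{diag}(1,-1)$) then goes through unchanged.
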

\begin{proof}
We may assume that $x\neq0$. By the Cartan decomposition of $\SL_2(\R)$, there exist $k_1, k_2 \in \SO(2)$ and $\zeta > 1$ such that $h = k_1 \mat{\zeta}{}{}{\zeta^{-1}} k_2$. Write $k_1 = \mat{\cos(\theta)}{\sin(\theta)}{-\sin(\theta)}{\cos(\theta)}$ for $\theta \in [0, 2\pi]$. Applying both sides of $h = k_1 \mat{\zeta}{}{}{\zeta^{-1}} k_2$ to $i$ as fractional linear transformations, and using that $\SO(2)$ stabilizes $i$, we get
$$
 y^2i+xy = \frac{\cos(\theta)\zeta^2 i + \sin(\theta)}{-\sin(\theta)\zeta^2 i + \cos(\theta)}.
$$
Simplifying and comparing the coefficients of $i$ and the constant terms, we get
$$
 -\zeta^2xy \sin(\theta)  = \cos(\theta) (\zeta^2-y^2), \qquad (1-\zeta^2y^2) \sin(\theta) = xy \cos(\theta).
$$
Note that, since $x, y \neq 0$, we have $\sin(\theta), \cos(\theta) \neq 0$ and $y \neq \pm\zeta, \pm 1/\zeta$. Hence, we can divide the above two equations and after simplification obtain $y^2 \zeta^4 - (1+x^2y^2+y^4) \zeta^2 + y^2 = 0$, which gives the lemma.
\end{proof}


\vspace{2ex}
Finally, we can give the \emph{proof of Lemma \ref{findinglambdazeta}}: Apply Lemma \ref{arch-toric-decomp} with $x=(b/2)/u$ and $y=a/u$, where $u = \sqrt{a\frac{\sqrt{-d}N_1}2}$, to get
$$
 A'^{-1} = u k_1 \mat{\zeta}{}{}{\zeta^{-1}} k_2,
$$
with $\zeta$ as in the statement of the lemma and $k_1, k_2 \in \SO(2)$. Then
$$
 g_{A'} = au^{-1} \mat{k_1}{}{}{{}^tk_1^{-1}} \mat{\frac{u^2}a\mat{\zeta}{}{}{\zeta^{-1}}}{}{}{\mat{\zeta^{-1}}{}{}{\zeta}} \mat{k_2}{}{}{{}^tk_2^{-1}},
$$
as required. Now \eqref{B-infty-formula} follows from \eqref{B-infty-known-formula}.
\qed
\section{Formulas for Fourier coefficients}

We again consider the situation where $F \in S_k^{(2)}(\SSp_4(\Z))$, $k$ even, is a Hecke eigenform and is a Saito-Kurokawa lift of $f \in S_{2k-2}(\SL_2(\Z))$. For every prime $p < \infty$ let $\alpha_p$ be the Satake parameter of $f$ at $p$. Let $a(S)$ denote the Fourier coefficient of $F$ at $S \in \mathcal P_2$.

\begin{proposition}\label{local-global-const-lem}
 Let $S = \mat{a}{b/2}{b/2}{c} \in \mathcal{P}_2$ be such that $a(S) \neq 0$. Assume that $\disc(S) = N_1^2d$ where $d<0$ is a fundamental discriminant. Let $L = c(S) = {\rm gcd}(a,b,c)$. For any prime $p < \infty$, set $a_p = v_p(N_1)$ and $b_p = v_p(L)$ and let $\delta_p = \big(\frac{d}p\big)$ be the Legendre symbol (by convention, $\delta_p=0$ if $p$ divides $d$). Let $C_S$ be the constant defined by \eqref{local-global-reln}. Then
 \begin{equation}\label{A(S)-explicit-eqn}
  a(S) =C_S(N_1)^{-3/2}\,\det(S)^{\frac k2} \prod\limits_{p | {N_1}} \sum\limits_{i=0}^{b_p} p^{\frac i2} \Big(\frac{\alpha_p^{a_p-i+1}-\alpha_p^{-a_p+i-1}}{\alpha_p-\alpha_p^{-1}} - \delta_p p^{-\frac 12} \frac{\alpha_p^{a_p-i}-\alpha_p^{-a_p+i}}{\alpha_p-\alpha_p^{-1}}\Big).
 \end{equation}
\end{proposition}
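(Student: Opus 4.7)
The plan is to compute the adelic Fourier coefficient $\Phi_F^S(1)$ in two different ways and equate. By \eqref{Phi-A-reln}, we have $\Phi_F^S(1) = a(S)e^{-2\pi\,{\rm Tr}(S)}$, while the definition \eqref{local-global-reln} of $C_S$ gives
$$
 \Phi_F^S(1) = C_S\, B_\infty^S(1) \prod_{p<\infty} B_p^S(1).
$$
After cancelling the archimedean exponential, the proof reduces to evaluating each local Bessel value.

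For the archimedean place, Lemma \ref{findinglambdazeta} supplies $B_\infty^S(1) = \det(S)^{k/2}e^{-2\pi\,{\rm Tr}(S)}$, which cancels the exponential on the left and contributes the factor $\det(S)^{k/2}$ on the right. For each finite prime $p$, Lemma \ref{finding-lm} rewrites $B_p^S(1) = B_p^{S'}(h(b_p, a_p - b_p))$, where $S' = S_d$ satisfies the standard assumptions \eqref{standardassumptionseq} and is normalized so that $B_p^{S'}(1) = 1$. For primes $p \nmid N_1$ one has $a_p = b_p = 0$, and such primes contribute trivially; so the product collapses to one over $p \mid N_1$.

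For the remaining primes $p \mid N_1$, I would invoke remark (3) after Theorem \ref{localmaasstheorem}, namely the combined formula \eqref{siegelserieseq}. This is legitimate because, as recalled in Section \ref{specialautformsec}, the local component $\pi_p$ of a Saito--Kurokawa lift is of type IIb with trivial central character and admits a special Bessel model, and hence has one Satake parameter equal to $p^{\pm1/2}$ while the ``other'' Satake parameter $\alpha$ is precisely $\alpha_p$, the Satake parameter of $f$ at $p$. First I would verify that $L = c(S)$ divides $N_1$ (this follows from $L^2 \mid \disc(S) = N_1^2 d$ with $d$ fundamental), so that $b_p \leq a_p$ and \eqref{siegelserieseq} may be applied with $m = a_p$ and $l = b_p$. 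I would also identify the local Legendre symbol $\bigl(\tfrac{L}{F}\bigr)$ with $\delta_p = (d/p)$, using that $-\det(S) = N_1^2 d/4$ makes the relevant quadratic extension $\Q_p(\sqrt{d})$. This produces
$$
 B_p^{S'}(h(b_p, a_p-b_p)) = p^{-3a_p/2}\sum_{i=0}^{b_p} p^{i/2}\left(\frac{\alpha_p^{a_p-i+1}-\alpha_p^{-a_p+i-1}}{\alpha_p-\alpha_p^{-1}} - \delta_p\,p^{-1/2}\,\frac{\alpha_p^{a_p-i}-\alpha_p^{-a_p+i}}{\alpha_p-\alpha_p^{-1}}\right).
$$

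Finally, collecting the prefactors via $\prod_{p \mid N_1} p^{-3a_p/2} = N_1^{-3/2}$ assembles the remaining ingredients into exactly \eqref{A(S)-explicit-eqn}. The proof is essentially an orchestration of results already established; the only subtle step is the identification of the ``other'' Satake parameter of $\pi_p$ with $\alpha_p$ and of $\bigl(\tfrac{L}{F}\bigr)$ with $(d/p)$, both of which are dictated by the Saito--Kurokawa construction and the shape of $\disc(S) = N_1^2 d$.
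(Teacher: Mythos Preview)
Your proposal is correct and follows essentially the same route as the paper's proof: both combine \eqref{Phi-A-reln} with \eqref{local-global-reln}, apply Lemmas \ref{findinglambdazeta} and \ref{finding-lm} to handle the archimedean and non-archimedean places respectively, and then invoke the explicit local Bessel values from Theorem \ref{localmaasstheorem}. The only cosmetic difference is that the paper cites parts iii) and v) of Theorem \ref{localmaasstheorem} separately while you cite their combination \eqref{siegelserieseq} directly; your added remarks on $L\mid N_1$ and the identification $\bigl(\tfrac{L}{F}\bigr)=\delta_p$ are correct and make explicit what the paper leaves implicit.
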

\begin{proof}
By Proposition \ref{fourier-relation-prop} and \eqref{local-global-reln},
$$
 a(S) e^{-2 \pi {\rm Tr}(S)} =  \Phi_F^{S}(1) = C_{S} \prod\limits_{p \leq \infty} B_p^{S}(1).
$$
Using Lemma \ref{findinglambdazeta}, it follows that
$$
 a(S)=C_{S}\,\det(S)^{k/2}\prod\limits_{p<\infty} B_p^{S}(1).
$$
Hence, by Lemma \ref{finding-lm},
\begin{align*}
 a(S)&=C_{S}\,\det(S)^{k/2}\prod\limits_{p|N_1}B_p^{S'}\Big(h\Big(v_p(L),v_p\Big(\frac{N_1}{L}\Big)\Big)\Big)\\
 &=C_{S}\,\det(S)^{k/2}\prod\limits_{p|N_1}B_p^{S'}(h(b_p,a_p-b_p)).
\end{align*}
By our remarks in Sect.~\ref{specialautformsec}, the non-archimedean local components $\pi_p$ of the automorphic representation $\pi_F$ are of the form $\chi_p1_{\GL(2)}\rtimes\chi_p^{-1}$ for an unramified character $\chi_p$ of $\Q_p^\times$ (type IIb). In fact, $\chi_p$ is the unramified character with $\chi_p(p)=\alpha_p$. Substituting the formulas in iii) and v) of Theorem \ref{localmaasstheorem} for the local Bessel functions, we obtain \eqref{A(S)-explicit-eqn}.
\end{proof}

Proposition \ref{local-global-const-lem} has been generalized to Hilbert-Siegel modular forms in \cite{O13}.


\subsection*{The formula of Das-Kohnen-Sengupta}
Let $f(\tau)=\sum_{n=1}^\infty c(n)e^{2\pi in\tau}\in S_{2k-2}(\SL_2(\Z))$ be an elliptic eigenform, with $k$ even, and assume that $F$ is the Saito-Kurokawa lift of $f$. If $\alpha_p$ is the Satake parameter at $p$, then
$$
 c(p^\mu)=p^{\mu(2k-3)/2}\,\frac{\alpha_p^{\mu+1}-\alpha_p^{-\mu-1}}{\alpha_p-\alpha_p^{-1}}.
$$
Hence formula \eqref{A(S)-explicit-eqn} may be rewritten as
\begin{align*}
  a(S)&=C_SN_1^{-3/2}\,\det(S)^{\frac k2} \prod\limits_{p | N_1} \sum\limits_{i=0}^{b_p} p^{\frac i2}p^{-(a_p-i)(2k-3)/2} \Big(c(p^{a_p-i})- \delta_p p^{-\frac 12} p^{(2k-3)/2}c(p^{a_p-i-1})\Big)\\
   &=C_SN_1^{-k}\det(S)^{\frac k2} \prod\limits_{p | N_1}\sum\limits_{i=0}^{b_p} p^{i(k-1)} \Big(c(p^{a_p-i})- \delta_p p^{k-2}c(p^{a_p-i-1})\Big).
\end{align*}
Now assume that $L=c(S) = N_1$. This is equivalent to saying that $S$ is a multiple of a matrix with fundamental discriminant. More precisely, putting $n=L=N_1$, we see that $S=nT$ where $T \in \mathcal{P}_2$ is such that $\disc(T) = d$. Hence,
\begin{align*}
  a(nT)&=C_{nT}\,n^{-k}\det(S)^{\frac k2} \prod\limits_{p|n}\,\sum\limits_{i=0}^{b_p} p^{i(k-1)} \Big(c(p^{a_p-i})- \delta_p p^{k-2}c(p^{a_p-i-1})\Big)\\
   &=C_T\,n^{-k}\det(S)^{\frac k2} \prod\limits_{p^\nu||n}\,\sum\limits_{i=0}^{\nu} p^{i(k-1)} \Big(c(p^{\nu-i})- \delta_p p^{k-2}c(p^{\nu-i-1})\Big)\\
   &=C_T\,\det(T)^{\frac k2} \prod\limits_{p^\nu||n}\,\sum\limits_{i=0}^{\nu} p^{(\nu-i)(k-1)} \Big(c(p^i)- \delta_p p^{k-2}c(p^{i-1})\Big).
\end{align*}
This coincides with the formula in Lemma 3.1 of \cite{DKS}. Comparison with this formula shows that $C_T\,\det(T)^{\frac k2}$ is a Fourier coefficient of the modular form of weight $k-1/2$ corresponding to $f$ under the Shimura lifting.
\section{Another proof of the classical Maass relations}
In this section we will give another proof of the Maass relations satisfied by the Fourier coefficients of a Saito-Kurokawa lift using our knowledge about Bessel models for the underlying automorphic representation, without recourse to the classical construction. This proof will not use Theorem~\ref{t:relationkst}.

\begin{theorem}\label{maass-rels-thm}
 Let $F$ be a cuspidal Siegel Hecke eigenform of weight $k$ with respect to $\Symp_4(\Z)$ which is a Saito-Kurokawa lift. For $S \in \mathcal P_2$, let $a(S)$ denote the Fourier coefficient of $F$ at $S$. Then
 \begin{equation}\label{maass-rels-eqn}
  a(\mat{a}{\frac b2}{\frac b2}{c}) = \sum\limits_{r | {\rm gcd}(a,b,c)} r^{k-1}\, a(\mat{\frac{ac}{r^2}}{\frac{b}{2r}}{\frac b{2r}}{1}).
 \end{equation}
\end{theorem}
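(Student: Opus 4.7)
The plan is to reduce the classical Maass relation to the local Maass relation (iii) of Theorem \ref{localmaasstheorem} using the explicit formulas for Fourier coefficients developed in the previous sections. Write $\disc(S) = N_1^2 d$ with $d < 0$ a fundamental discriminant, put $L = \gcd(a,b,c)$, and set $a_p = v_p(N_1)$, $b_p = v_p(L)$ at each prime $p$. For each divisor $r \mid L$, let
$$
S_r = \mat{ac/r^2}{b/(2r)}{b/(2r)}{1} \in \mathcal{P}_2,
$$
so that $c(S_r) = 1$, $\disc(S_r) = (N_1/r)^2 d$ and $\det(S_r) = \det(S)/r^2$.

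First I would observe that $S$ and all the $\{S_r\}_{r \mid L}$ lie in a single orbit for the equivalence $S' \sim v\cdot{}^t\!ASA$ with $v \in \Q^\times$, $A \in \GL_2(\Q)$: explicitly,
$$
S_r = {}^t\!A_r S_1 A_r,\quad A_r = \mat{1/r}{0}{0}{1},\qquad S = c\cdot{}^t\!B S_1 B,\quad B = \mat{1/c}{0}{0}{1}.
$$
By Lemma \ref{const-lem}, this forces $C_S = C_{S_r}$ for all $r \mid L$ in the ``generic'' case treated below. Combining Lemma \ref{findinglambdazeta} at the archimedean place, Lemma \ref{finding-lm} at each finite place, and \eqref{local-global-reln}, the normalizations then yield
$$
a(S) = C_S \det(S)^{k/2}\prod_{p \mid N_1} B_p^{S_d}(h(b_p, a_p - b_p)),
$$
$$
a(S_r) = C_S \det(S)^{k/2}\, r^{-k}\prod_{p \mid N_1} B_p^{S_d}(h(0, a_p - v_p(r))),
$$
where in the second line the product has been harmlessly extended from $p \mid N_1/r$ to $p \mid N_1$: at primes with $v_p(r) = a_p$, the inserted factor is $B_p^{S_d}(h(0,0)) = B_p^{S_d}(1) = 1$.

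Next I would invoke the local Maass relation (iii) of Theorem \ref{localmaasstheorem}, available because every local component $\pi_p$ is of type IIb (see Section \ref{specialautformsec}) and hence satisfies condition (ii) of the theorem. At each $p \mid N_1$ this reads
$$
B_p^{S_d}(h(b_p, a_p - b_p)) = \sum_{j_p=0}^{b_p} p^{-j_p}\, B_p^{S_d}(h(0, a_p - j_p)).
$$
Multiplying these identities across all $p \mid N_1$ and reindexing the resulting product of sums by $r = \prod_p p^{j_p}$, which ranges over divisors of $L$ as each $j_p$ ranges from $0$ to $b_p$, gives
$$
\prod_{p \mid N_1} B_p^{S_d}(h(b_p, a_p - b_p)) = \sum_{r \mid L} r^{-1}\prod_{p \mid N_1} B_p^{S_d}(h(0, a_p - v_p(r))).
$$
Multiplying through by $C_S\det(S)^{k/2}$ and comparing with the two formulas above delivers exactly $a(S) = \sum_{r \mid L} r^{k-1} a(S_r)$, which is \eqref{maass-rels-eqn}.

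The main obstacle is the bookkeeping required to ensure $C_S = C_{S_r}$ in all cases, since Lemma \ref{const-lem} as stated has a non-vanishing hypothesis on the adelic Fourier integrals at the identity. However, the non-vanishing of each local Bessel functional $\beta_p$ depends only on $\pi_p$ and the $\GL_2(\Q_p)$-orbit (up to scaling) of the character $\theta_S$, and this orbit is common to all of $S, S_1, \ldots, S_L$ by the computations above. Hence either the global functional $\beta$ vanishes identically, in which case $a(S)$ and every $a(S_r)$ vanish and \eqref{maass-rels-eqn} trivially holds, or the factorization \eqref{local-global-reln} applies simultaneously for $S$ and every $S_r$ with a common non-zero constant, in which case the argument above produces \eqref{maass-rels-eqn} via a clean prime-by-prime matching between the local Maass relations and the combinatorics of summing over divisors of $L$.
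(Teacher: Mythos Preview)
Your proof is correct and follows essentially the same route as the paper's: express $a(S)$ and each $a(S_r)$ via the factorization \eqref{local-global-reln} together with Lemmas \ref{finding-lm}, \ref{findinglambdazeta}, \ref{const-lem}, and then reduce the Maass relation to the prime-by-prime identity (iii) of Theorem \ref{localmaasstheorem}, which holds because each $\pi_p$ is of type IIb. Your treatment of the non-vanishing hypothesis in Lemma \ref{const-lem} (splitting into the cases $\beta=0$ and $\beta\neq0$) is actually more explicit than the paper's own argument, which tacitly assumes the relevant Bessel values are non-zero when cancelling them.
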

\begin{proof}
Let $S = \mat{a}{\frac b2}{\frac b2}{c}$. As usual, we write $\disc(S) = N_1^2d$ where $d$ is a fundamental discriminant.
Let $c(S)= L$. For $r | L$, set $S^{(r)} = \mat{\frac{ac}{r^2}}{\frac b{2r}}{\frac b{2r}}{1}$. By Proposition \ref{fourier-relation-prop} and \eqref{local-global-reln},
$$
 a(S) e^{-2 \pi {\rm Tr}(S)}=  \Phi_F^{S}(1) = C_{S} \prod\limits_{p \leq \infty} B_p^{S}(1).
$$
Using Lemma \ref{findinglambdazeta}, we get
$$
 a(S)=C_{S}\,\det(S)^{k/2}\prod\limits_{p<\infty} B_p^{S}(1).
$$
Hence, by Lemma \ref{finding-lm},
\begin{align*}
 a(S)&=C_{S}\,\det(S)^{k/2}\prod\limits_{p|L\;\text{or}\;p|(N_1/L)}
 B_p^{S'}\Big(h\Big(v_p(L),v_p\Big(\frac{N_1}{L}\Big)\Big)\Big)\\
  &=C_{S}\,\det(S)^{k/2}\bigg(\prod\limits_{p|L}B_p^{S'}
  \Big(h\Big(v_p(L),v_p\Big(\frac{N_1}{ L}\Big)\Big)\Big)\bigg)\bigg(\prod\limits_{\substack{p\nmid L\\p|N_1 }}B_p^{S'}\big(h(0,v_p(N_1))\big)\bigg).
\end{align*}
Analogously,
\begin{align*}
 a(S^{(r)})&=C_{S^{(r)}}\,\det(S^{(r)})^{k/2}
 \prod\limits_{p|N_1/r}B_p^{S'}\Big(h\Big(0,v_p\Big(\frac{N_1}
 {r}\Big)\Big)\Big)\\
  &=C_{S^{(r)}}\,\det(S^{(r)})^{k/2}\bigg(\prod\limits_{\substack{p|L\\p|N_1/r}}B_p^{S'}\Big(h\Big(0,v_p\Big(\frac{N_1}{r}\Big)\Big)\Big)\bigg)\bigg(\prod\limits_{\substack{p\nmid L\\p|N_1}}B_p^{S'}\big(h(0,v_p(N_1))\big)\bigg).
\end{align*}
If $N_1/r$ is not divisible by $p$, then $B_p^{S'}(h(0, v_p(N_1/r)))=1$ by our normalizations. Hence, the second condition under the first product sign can be omitted. Since $C_S=C_{S^{(r)}}$ by Lemma \ref{const-lem}, we conclude from the above equations that
$$
 r^{k-1}a(S^{(r)})\prod\limits_{p | L} B_p^{S'}\Big(h\Big(v_p(L), v_p\Big(\frac{N_1}{{} L}\Big)\Big)\Big) = \frac{a(S)}{r}\;\prod\limits_{p | L} B_p^{S'}\Big(h\Big(0, v_p\Big(\frac{N_1}{r}\Big)\Big)\Big).
$$
Applying $\sum_{r|L}$ to both sides gives
$$
 \Big(\sum_{r|L}r^{k-1}a(S^{(r)})\Big)\prod\limits_{p|L}B_p^{S'}\Big(h\Big(v_p(L), v_p\Big(\frac{N_1}{{} L}\Big)\Big)\Big)=a(S)\sum_{r|L}\frac{1}{r}\;\prod\limits_{p | L} B_p^{S'}\Big(h\Big(0, v_p\Big(\frac{N_1}{r}\Big)\Big)\Big).
$$
Since our assertion is $a(S)=\sum_{r|L}r^{k-1}a(S^{(r)})$, we are done if we can prove that
\begin{equation}\label{maass-rels-thmeq1}
 \prod\limits_{p|L}B_p^{S'}\Big(h\Big(v_p(L), v_p\Big(\frac{N_1}{{} L}\Big)\Big)\Big)=\sum_{r|L}\frac{1}{r}\;\prod\limits_{p | L} B_p^{S'}\Big(h\Big(0, v_p\Big(\frac{N_1}{r}\Big)\Big)\Big).
\end{equation}
Let $p_1^{\alpha_1}\cdot\ldots\cdot p_s^{\alpha_s}$ with $\alpha_j>0$ be the prime factorization of $L$. Then the right hand side of \eqref{maass-rels-thmeq1} equals
\begin{align*}
 &\sum_{i_1=0}^{\alpha_1}\ldots\sum_{i_s=0}^{\alpha_s}\,\frac{1}{p_1^{i_1}\cdot\ldots\cdot p_s^{i_s}}\;\prod\limits_{j=1}^s B_{p_j}^{S'}\Big(h\Big(0, v_{p_j}\Big(\frac{N_1}{{} p_1^{i_1}\cdot\ldots\cdot p_s^{i_s}}\Big)\Big)\Big)\\
 &\qquad=\sum_{i_1=0}^{\alpha_1}\ldots\sum_{i_s=0}^{\alpha_s}\;\prod\limits_{j=1}^s\Big(p_j^{-i_j} B_{p_j}^{S'}\Big(h(0, v_{p_j}(N_1)-i_j)\Big)\Big)\\
 &\qquad=\prod\limits_{j=1}^s\bigg(\sum_{i_j=0}^{\alpha_j}p_j^{-i_j} B_{p_j}^{S'}\Big(h(0, v_{p_j}(N_1)-i_j)\Big)\bigg).
\end{align*}
The left hand side of \eqref{maass-rels-thmeq1} equals
$$
 \prod\limits_{j=1}^sB_{p_j}^{S'}\Big(h(\alpha_j, v_{p_j}(N_1)-\alpha_j)\Big).
$$
Hence, \eqref{maass-rels-thmeq1} is proved if we can show that
\begin{equation}\label{maass-rels-thmeq2}
 B_{p_j}^{S'}\Big(h(\alpha_j, v_{p_j}(N_1)-\alpha_j)\Big)=\sum_{i=0}^{\alpha_j}p_j^{-i} B_{p_j}^{S'}\Big(h(0, v_{p_j}(N_1)-i)\Big)
\end{equation}
for all $j$. This follows from the implication ii) $\Rightarrow$ iii) of Theorem \ref{localmaasstheorem}, since, by our remarks in Sect.~\ref{specialautformsec}, the non-archimedean local components $\pi_p$ of $\pi_F$ are of the form $\chi1_{\GL(2)}\rtimes\chi^{-1}$ for an unramified character $\chi$ of $\Q_p^\times$ (type IIb).
%
%
\end{proof}

This theorem proves the Maass relations for the Saito-Kurokawa liftings constructed via representation theory, as indicated in the diagram \eqref{SKdiagrameq}. Since the proof comes down to properties of local $p$-adic Bessel functions, we expect similar relations to exist for other types of Siegel modular forms constructed from CAP type representations. This will be the topic of future investigations.
\section{A new characterization of Saito-Kurokawa lifts}
There are several different ways to characterize when a Siegel eigenform $F$ of weight $k$ with respect to $\Symp_4(\Z)$ is a Saito-Kurokawa lift. It was proved in the early 1980's by Maass, Andrianov and Zagier \cite{Ma,An2,Z} that $F$ is a Saito-Kurokawa lift if and only if $F$ satisfies the classical Maass relations~\eqref{maass-rels-eqn-3}. Since then, several other equivalent conditions have been discovered~\cite{H1,H2,PS3,RS3}; we refer to~\cite{PSFR} for a survey. All those conditions involve checking infinitely many relations on the Fourier coefficients of $F$. The advantage of those conditions, on the other hand, is that they also apply to non-eigenforms. Indeed, a Siegel cusp form $F$ of weight $k$ with respect to $\Symp_4(\Z)$ satisfies any of those conditions if and only if it lies in the Maass \emph{Spezialschar}, i.e., it is a linear combination of eigenforms that are Saito-Kurokawa lifts.

In~\cite{PSFR}, some new characterizations for a Siegel eigenform  being a Saito-Kurokawa lift were proved. These involved a single condition on the Hecke eigenvalues at a single prime. Unlike the conditions referred to in the previous paragraph, the new characterizations of~\cite{PSFR} are only applicable to eigenforms.

Below, we prove yet another condition that  if satisfied
implies that a Siegel eigenform is a Saito-Kurokawa lift. Like in~\cite{PSFR}, this new condition only applies to Hecke eigenforms (though we need this only at a single prime) and involves checking a single condition. However, unlike in~\cite{PSFR}, this new condition is phrased purely in terms of Fourier coefficients; thus, it is closer in spirit to the original relations of Maass, Andrianov, and Zagier. Indeed, this new condition, as phrased in~\eqref{avmaassrel} below, is nothing but a single ``Maass relation on average".

Let
$$
 F(Z) = \sum\limits_{S\in \mathcal{P}_2}a(S) e^{2 \pi i\,{\rm Tr}(SZ)}
$$
be a Siegel cusp form of weight $k$ with respect to $\Symp_4(\Z)$. Let $D<0$ be a discriminant and $L>0$ an integer. Recall that $H(D; L)$ denotes the set of equivalence classes of matrices in $\mathcal{P}_2$ whose content is equal to $L$ and whose discriminant is equal to $DL^2$. We define
\begin{equation}\label{atildedef}
 \widetilde{a}(D;L) = \frac{1}{|H(D; L)|}\sum_{[S]\in H(D; L)}a(S).
\end{equation}
In other words, $\widetilde{a}(D;L)$ is the \emph{average} of the Fourier coefficients of $F$ over matrices of discriminant $DL^2$ and content $L$. We now state our result.
\begin{theorem}\label{t:newsk}
 Let $F$ be a Siegel cusp form of weight $k$ with respect to $\Symp_4(\Z)$. Suppose that there is a prime $p$ such that $F$ is an eigenform for the local Hecke algebra at $p$ (equivalently, $F$ is an eigenform for the Hecke operators $T(p)$ and $T(p^2)$). For each discriminant $D<0$ and each positive integer $L$, let $\widetilde{a}(D;L)$ be defined as in~\eqref{atildedef}. Then the following are equivalent.
 \begin{enumerate}
  \item $F$ lies in the Maass \emph{Spezialschar}.
  \item There exists a fundamental discriminant $d<0$ such that $\widetilde{a}(d;1) \neq 0$ and
   \begin{equation}\label{avmaassrel}
    \widetilde{a}(d;p) = \widetilde{a}(dp^2;1) + p^{k-1}\,\widetilde{a}(d;1).
   \end{equation}
 \end{enumerate}
\end{theorem}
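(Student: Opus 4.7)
My plan is to prove the two implications separately. The forward direction (i) $\Rightarrow$ (ii) will follow quickly from Theorem~\ref{t:globalmain}, while the harder reverse direction (ii) $\Rightarrow$ (i) combines Theorems~\ref{t:relationkst} and~\ref{localmaasstheorem} with a global classification step.

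For (i) $\Rightarrow$ (ii), I would decompose $F=\sum_j c_jF_j$ as a linear combination of Saito-Kurokawa lift eigenforms. By Theorem~\ref{t:globalmain}, each $F_j$ has Fourier coefficients depending only on $\disc(S)$ and $c(S)$ and satisfying the Maass relation~\eqref{maass-rels-eqn2b}. By linearity, the same holds for $F$, so $\widetilde{a}(D;L)=a_F(D;L)$, and~\eqref{avmaassrel} is exactly the special case of~\eqref{maass-rels-eqn2b} with $D=d$, $L=p$. The existence of some fundamental discriminant $d$ with $\widetilde{a}(d;1)\neq 0$ is automatic from $F\neq 0$.

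For (ii) $\Rightarrow$ (i), I would first decompose $F=\sum_i F_i$ into classical Hecke eigenforms generating irreducible cuspidal automorphic representations $\pi^{(i)}$ of $\GSp_4(\A)$. Because $F$ is assumed to be a Hecke eigenform at $p$, all $\pi_p^{(i)}$ have the same Satake parameters and are therefore isomorphic to a common spherical representation $\pi_p$; in particular their spherical Bessel functions at $p$ coincide with a common $B_p$. Applying Theorem~\ref{t:relationkst} with $\Lambda=1$ to each $F_i$ and summing over $i$ yields
\begin{equation*}
\widetilde{a}(d;p)=p^k\,\widetilde{a}(d;1)\,B_p(h(1,0)),\qquad \widetilde{a}(dp^2;1)=p^k\,\widetilde{a}(d;1)\,B_p(h(0,1)).
\end{equation*}
Substituting these into~\eqref{avmaassrel} and dividing by $p^k\widetilde{a}(d;1)\neq 0$ yields $B_p(h(1,0))=B_p(h(0,1))+p^{-1}$, which is condition~(iv) of Theorem~\ref{localmaasstheorem}. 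Hence $\pi_p$ is of type IIb, Vd, or VId, and in particular one of its Satake parameters equals $p^{\pm 1/2}$, so $\pi_p$ is non-tempered.

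To conclude, I need to deduce from the non-temperedness of $\pi_p$ at this single prime that each $F_i$ is a Saito-Kurokawa lift. I would do this either by citing~\cite{PSFR}, where precisely such a single-prime Hecke-eigenvalue condition is shown to characterize Saito-Kurokawa lifts among level-one Siegel cuspidal Hecke eigenforms, or by invoking the classification of cuspidal automorphic representations of $\GSp_4(\A)$: Weissauer's theorem on the Ramanujan conjecture forces any cuspidal representation non-tempered at a finite place and with a holomorphic discrete series archimedean component to be CAP, and for level-one forms of weight $k$ the only such CAP packets are of Saito-Kurokawa type. This final classification step is the main obstacle; everything before it is essentially bookkeeping around Theorems~\ref{t:relationkst} and~\ref{localmaasstheorem}.
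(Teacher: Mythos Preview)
Your proposal follows essentially the same route as the paper's proof: for (ii) $\Rightarrow$ (i) you decompose $F$ into full Hecke eigenforms sharing the local component $\pi_p$, apply Theorem~\ref{t:relationkst} with $\Lambda=1$ to obtain $\widetilde{a}(dp^{2m};p^l)=p^{(l+m)k}\widetilde{a}(d;1)B_p(h(l,m))$, deduce the relation~\eqref{maass-rels-thmeq3d2}, invoke Theorem~\ref{localmaasstheorem} to get a Satake parameter $p^{\pm1/2}$, and then use Weissauer~\cite{W} to conclude CAP type. This is exactly what the paper does.

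There is one point in your (i) $\Rightarrow$ (ii) that is not correct as stated. You write that the existence of a fundamental discriminant $d<0$ with $\widetilde{a}(d;1)\neq 0$ is ``automatic from $F\neq 0$''. It is not: the fact that a nonzero Siegel cusp form of degree~$2$ has a nonvanishing Fourier coefficient at some matrix of content~$1$ and \emph{fundamental} discriminant is precisely the main result of~\cite{S}, and the paper explicitly cites this. The Maass relations~\eqref{maass-rels-eqn2b} only reduce you to content-$1$ coefficients, not to fundamental discriminants. (For forms in the Spezialschar one can alternatively argue via the half-integral weight correspondence, but some argument is needed.) Replace ``automatic'' with a citation of~\cite{S} and your proof is complete and matches the paper's.
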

\begin{proof} Suppose that $F$ lies in the Maass \emph{Spezialschar}. Then  $\widetilde{a}(D;L)= a(D;L)$ where $a(D;L)$ is defined as in Theorem~\ref{t:globalmain}. It follows immediately from Theorem~\ref{t:globalmain} that~\eqref{avmaassrel} is satisfied for all $d$ and $p$. A special case of the main result of~\cite{S} tells us that there exists a fundamental discriminant $d<0$ such that   $\widetilde{a}(d;1) \neq 0$.

Next, suppose that there exists a fundamental discriminant $d<0$ such that   $\widetilde{a}(d;1) \neq 0$. Since $F$ is an eigenform of the local Hecke algebra at $p$, there exists a well-defined representation $\pi_p$ of $\GSp_4(\Q_p)$ attached to $F$. Indeed, for each  irreducible subrepresentation $\pi'$ of the representation of $G(\A)$ generated by $\Phi_F$, the local component of $\pi'$ at $p$ is isomorphic to $\pi_p$. By writing $F$ as a linear combination of Hecke eigenforms, applying Theorem~\ref{t:relationkst} on each of these eigenforms and then putting it all together, we conclude that, for all non-negative integers $l,m$,
$$
 \widetilde{a}(dp^{2m}; p^l) = p^{(l+m)k}\,\widetilde{a}(d; 1) B_{p}(h(l, m)),
$$
where $B_p$ is the spherical vector in the $(S_d, 1, \theta_p)$-Bessel model for $\pi_p$. The relation~\eqref{avmaassrel} together with the non-vanishing of $\widetilde{a}(d;1)$  now implies that \begin{equation}\label{final}B_{p}(h(1, 0))= B_{p}(h(0, 1)) + p^{-1}.\end{equation}
Using Theorem~\ref{localmaasstheorem}, it follows that one of the Satake parameters of $\pi_p$ is $p^{\pm1/2}$. Thus, each irreducible subrepresentation of the representation of $G(\A)$ generated by $\Phi_F$ is non-tempered at $p$. By a result of Weissauer~\cite{W}, it follows that each irreducible subrepresentation of the representation of $G(\A)$ generated by $\Phi_F$ is of CAP-type. This is equivalent to saying that $F$ lies in the Maass \emph{Spezialschar}.
\end{proof}

Theorem~\ref{t:newsk} tells us that if an eigenform $F$ lies in the orthogonal complement of the Maass \emph{Spezialschar} and satisfies $\widetilde{a}(d;1) \neq 0$ for some fundamental discriminant $d<0$, then
$$
 \widetilde{a}(d;p) \neq \widetilde{a}(dp^2;1) + p^{k-1}\,\widetilde{a}(d;1)
$$
for every prime $p$. We end this section with a slightly weaker version of this result that applies to non-eigenforms.

\begin{theorem}\label{t:newsk2}
 Let $F$ be a Siegel cusp form of weight $k$ with respect to $\Symp_4(\Z)$. Suppose that $F$ lies in the orthogonal complement of the Maass \emph{Spezialschar}. For each discriminant $D<0$ and each positive integer $L$, let $\widetilde{a}(D;L)$ be defined as in~\eqref{atildedef}. Let $d<0$ be a fundamental discriminant such that $\widetilde{a}(d;1) \neq 0$. Then, for all sufficiently large primes $p$, we have      \begin{equation}\label{avmaassrelnew1}
    \widetilde{a}(d;p) \neq \widetilde{a}(dp^2;1) + p^{k-1}\,\widetilde{a}(d;1).
   \end{equation}
\end{theorem}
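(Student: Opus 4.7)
The plan is to decompose $F$ into Hecke eigenforms and reduce the problem to an asymptotic analysis of the local Bessel function values controlled by Theorem~\ref{t:relationkst}. Since $S_k^{(2)}(\Symp_4(\Z))$ is finite-dimensional and the global Hecke algebra acts by commuting self-adjoint operators, we may write $F = \sum_i c_i F_i$ with each $F_i$ a Hecke eigenform. The Maass \emph{Spezialschar} is Hecke-stable, so the orthogonality hypothesis on $F$ forces each $F_i$ to be orthogonal to the \emph{Spezialschar}, and in particular none of them is a Saito--Kurokawa lift. By the result of Weissauer cited in the proof of Theorem~\ref{t:newsk}, the representation $\pi_{F_i}$ attached to $F_i$ is then tempered at every finite place, so for every prime $p$ the Satake parameters $\alpha_{p,i}^{\pm1},\beta_{p,i}^{\pm1}$ of $\pi_{p,i}$ lie on the unit circle.

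Next, we apply Theorem~\ref{t:relationkst} to each $F_i$ with $\Lambda$ trivial and with $(L,M)=(p,1)$ and $(L,M)=(1,p)$, obtaining
\[
\widetilde{a}_{F_i}(d;p) = p^k\,\widetilde{a}_{F_i}(d;1)\,B_{p,i}(h(1,0)),\qquad \widetilde{a}_{F_i}(dp^2;1) = p^k\,\widetilde{a}_{F_i}(d;1)\,B_{p,i}(h(0,1)),
\]
where $B_{p,i}$ is the spherical vector in the $(S_d,1,\theta_p)$-Bessel model for $\pi_{p,i}$, normalized by $B_{p,i}(1)=1$. Since $\widetilde{a}(D;L)$ is linear in $F$, the quantity
\[
E_p \;:=\; \widetilde{a}(d;p) - \widetilde{a}(dp^2;1) - p^{k-1}\widetilde{a}(d;1)
\]
equals $\sum_i c_i\,\widetilde{a}_{F_i}(d;1)\,\Delta_i(p)$, where $\Delta_i(p) := p^k B_{p,i}(h(1,0)) - p^k B_{p,i}(h(0,1)) - p^{k-1}$.

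The third step is to show that $\Delta_i(p) = -p^{k-1} + O(p^{k-2})$ uniformly in $i$ as $p \to \infty$. Using the explicit formulas for $P_j$ and $H_j$ in the proof of Theorem~\ref{localmaasstheorem}, one reads off directly that $B_{p,i}(h(0,1)) = H_1 - P_1 = O(p^{-2})$; an analogous expansion of Sugano's two-variable generating series $H(x,y)/(P(x)Q(y))$ in $y$ gives $B_{p,i}(h(1,0)) = O(p^{-2})$. Because temperedness bounds $|\alpha_{p,i}+\alpha_{p,i}^{-1}|$ and $|\beta_{p,i}+\beta_{p,i}^{-1}|$ by $2$, these estimates are uniform over the finite collection of $F_i$. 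It follows that $\Delta_i(p) = -p^{k-1} + O(p^{k-2})$, and summing against $c_i\widetilde{a}_{F_i}(d;1)$ yields
\[
E_p = -p^{k-1}\,\widetilde{a}(d;1) + O(p^{k-2}),
\]
which is nonzero for all sufficiently large $p$ since $\widetilde{a}(d;1)\neq 0$.

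The main obstacle will be the asymptotic $B_{p,i}(h(1,0)) = O(p^{-2})$ uniformly in $i$. The value $B(h(0,1))$ appears in the proof of Theorem~\ref{localmaasstheorem} via Sugano's formula in one variable, but extracting the $y$-coefficient of $H(x,y)/(P(x)Q(y))$ requires going back to the two-variable expressions in \cite{Su} and checking that the potential $p^{-1}$-order contribution cancels, leaving genuine $O(p^{-2})$ behavior. Once this uniform bound is in place, the remaining argument is a short comparison of leading terms.
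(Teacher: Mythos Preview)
Your proposal is correct and follows essentially the same route as the paper: decompose into Hecke eigenforms, invoke Weissauer's result for temperedness, apply Theorem~\ref{t:relationkst} with $\Lambda=1$, and use Sugano's formula together with the bound $|\alpha_{p,i}|=|\beta_{p,i}|=1$ to isolate the leading term $-p^{k-1}\,\widetilde{a}(d;1)$.

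One small correction: the value $B_{p,i}(h(1,0))$ is in general only $O(p^{-3/2})$, not $O(p^{-2})$; the $p^{-3/2}$ contribution comes from terms of the shape $p^{-1/2}(A+B)$ in Sugano's formula and does not cancel unless $A+B=0$. This does not affect your conclusion, since $O(p^{k-3/2})$ is still $o(p^{k-1})$. The paper avoids estimating $B_{p,i}(h(1,0))$ and $B_{p,i}(h(0,1))$ separately and instead computes the difference directly, obtaining
\[
\lim_{p\to\infty}p\big(B_{p,i}(h(1,0))-B_{p,i}(h(0,1))-p^{-1}\big)=-1,
\]
which one can read off from the identity for the $y$-coefficient of $T(y)$ established in the proof of iv)$\Rightarrow$i) in Theorem~\ref{localmaasstheorem}.
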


\begin{proof}
Write $F = \sum_{i=1}^s F_i$ such that each $F_i$ is a Hecke eigenform. Let $\widetilde{a}_i(D;L)$ be the quantity corresponding to $F_i$. Denote $c_i = \widetilde{a}_i(d;1).$ So we have $\sum_{i=1}^s c_i = \widetilde{a}(d;1) \neq 0$.

Now, as in the proof of Theorem~\ref{t:newsk}, let $\pi_{i,p}$ be the representation of $\GSp_4(\Q_p)$ attached to $F_i$ at each prime $p$. Note that by our assumption, and by the result of Weissauer~\cite{W}, the representation $\pi_{i,p}$ is tempered. As before, we have $$
 \widetilde{a}_i(dp^{2m}; p^l) = p^{(l+m)k}\,\widetilde{a}_i(d; 1) B_{i,p}(h(l, m)),
$$
where $B_{i,p}$ is the spherical vector in the $(S_d, 1, \theta_p)$-Bessel model for $\pi_{i,p}$. This implies that
\begin{equation}\label{finalnew}
 \widetilde{a}(d;p) - \widetilde{a}(dp^2;1) - p^{k-1}\,\widetilde{a}(d;1) = p^k\sum_{i=1}^s c_i(B_{i,p}(h(1, 0))- B_{p}(h(0, 1)) - p^{-1}).
\end{equation}
Using Sugano's formula, and using the fact that the local parameters of $\pi_{i,p}$ lie on the unit circle, it is easy to check that
$$
 \lim_{p\rightarrow \infty}p(B_{i,p}(h(1, 0))- B_{i,p}(h(0, 1)) - p^{-1})=-1.
$$
So~\eqref{finalnew} implies that
$$
 \lim_{p\rightarrow \infty}p^{1-k} (\widetilde{a}(d;p) - \widetilde{a}(dp^2;1) - p^{k-1}\,\widetilde{a}(d;1)) = -\sum_{i=1}^s c_i = -\widetilde{a}(d;1) \neq 0.
$$
It follows that $\widetilde{a}(d;p) - \widetilde{a}(dp^2;1) - p^{k-1}\,\widetilde{a}(d;1) \neq 0$ for all sufficiently large primes $p$.
\end{proof}

\end{document}